\tikzstyle{block}=[draw opacity=0.7,line width=1.4cm]
\tikzset{
  big black arrow/.style={
    decoration={markings,mark=at position 1 with {\arrow[scale=2.5,black]{>}}},
    postaction={decorate},
    shorten >=0.4pt},
    line/.style={draw, ->}}
\newdimen\plusheight
\def\+{\;\lower\plusheight\hbox{$+$}\;}
\newdimen\minusheight
\def\-{\;\lower\minusheight\hbox{$-$}\;}
\newdimen\cdotsheight
\def\cds{\lower\cdotsheight\hbox{$\cdots$}}
\renewcommand{\(}{\left\(}
\renewcommand{\)}{\right\)}
\renewcommand{\pmod}[1]{\,(\textup{mod}\,#1)}
\numberwithin{equation}{section}
\theoremstyle{plain}
\newtheorem{theorem}{Theorem}[section]
\newtheorem{lemma}[theorem]{Lemma}
\newtheorem{remarks}[theorem]{Remark}
\begin{document}
\begin{center}{\bf Partition-Theoretic Results and Recurrence Relations for the Coefficients of Some Mock Theta Functions
	}\end{center}
	\begin{center}	
	\footnotesize{\bf Sabi Biswas and Nipen Saikia$^{\ast}$}\\
					Department of Mathematics, Rajiv Gandhi
			University,\\ Rono Hills, Doimukh-791112, Arunachal Pradesh, India.\\
			E. Mail(s): sabi.biswas@rgu.ac.in; nipennak@yahoo.com\\
			$^\ast$\textit{Corresponding author}.\end{center}\vskip2mm
			
			\noindent {\bf Abstract:} In this paper, we give  partition-theoretic results for the coefficients of some mock theta functions and  prove their congruence properties. Some recurrence relations connecting the coefficients of the mock theta functions with certain restricted partition functions are also established.
			
			\vskip 3mm
					\noindent  {\bf Keywords and phrases:} Mock theta function;  partition congruences; recurrence relations. 
					\vskip 3mm
					\noindent  {\bf 2020 Mathematical Subject Classification:} 11B75; 11P83.
\section{Introduction} 
In 1920, Ramanujan introduced $17$ functions in his last letter to G. H. Hardy~\cite[p. 534]{R}, which he called as mock theta functions. Initially, Ramanujan divided his list of mock theta functions into odd order as three, five and seven.
 After Ramanujan, many new mock theta-functions are defined and studied by different mathematicians. An account of these can be found in the papers by  Andrews~\cite{A1}, Andrews and Hickerson \cite{A2}, Gordon and  McIntosh~\cite{G1}, K. Hikami~\cite{HK} and references therein.  In this paper, we are interested in following mock theta functions of order two, six and eight:
\begin{align}
\label{u}
&\mu(q)=\sum_{n=0}^{\infty}\dfrac{(-1)^nq^{n^2}(q;q^2)_n}{(-q^2;q^2)_{n}^2},\\
\label{s}
&\sigma(q)=\sum_{n=0}^{\infty}\dfrac{q^{(n+1)(n+2)/2}(-q;q)_n}{(q;q^2)_{n+1}},\\
\label{b}
&\beta(q)=\sum_{n=0}^{\infty}\dfrac{q^{3n^2+3n+1}}{(q;q^3)_{n+1}(q^2;q^3)_{n+1}},\\
\label{l}
&\lambda(q)=\sum_{n=0}^{\infty}\dfrac{(-1)^nq^n(q;q^2)_n}{(-q;q)_n},\\
\label{v}
& v(q)=\sum_{n=0}^{\infty}\dfrac{q^{(n+1)^2}(-q;q^2)_n}{(q;q^2)_{n+1}},
\end{align}where $$(b;q)_n:=\prod_{k=0}^{n-1}(1-bq^k)\quad \text{and}\quad (b;q)_\infty:=\prod_{k=0}^{\infty}(1-bq^k).$$ For brevity, we will  write,  for any positive  integer $k$, $$\ell _{k} := (q^k;q^k)_{\infty}$$ and
$$ 
(b_{1},b_{2},b_{3},...,b_{k};q)_{\infty}:=(b_{1};q)_{\infty}(b_{2};q)_{\infty}(b_{3};q)_{\infty}....(b_{k};q)_{\infty}.$$
The function defined in \eqref{v} is a eighth order mock theta function defined in \cite[pp. 322-323]{G1}. Agarwal and Sood~\cite{AS} gave combinatorial interpretation of $v(q)$ using  split $(n + t)$-color partitions. Rana and Sareen~\cite{D} extended their results using signed partitions.

The function defined in \eqref{u} is the second order mock theta function which appeared in Ramanujan's lost notebook~\cite{R} (see also~\cite{A1}). Combinatorial interpretation of $\mu(q)$ was given by Kaur and Rana in~\cite{RK1}.

The functions defined in \eqref{s}-\eqref{l} are the sixth order mock theta functions. In 2018, Zhang~\cite{ZS} established some congruences modulo $3$, $5$ and $7$ for the coefficients of the mock theta fucntion $\beta(q)$. Kaur and Rana~\cite{RK} proved some particular and infinite families of congruences for the coefficients of the mock theta fucntion $\lambda(q)$.

In this paper, we prove partition-theoretic results for the coeffcients of the mock theta functions defined in \eqref{s}-\eqref{v}, and establish their congruence properties. We also prove some recurrence relations connecting the coefficients of the mock theta functions and certain restricted partition functions. The results on mock theta functions $v(q)$,  $\sigma(q)$,  $\beta(q)$ and $\lambda(q)$ are established in Sections 3, 4, 5,  and 6. respectively. Section 2 is devoted to record some preliminary results which will be used in the subsequent sections.

We end this introduction by defining  partition of a positive integer,  colour partition  of a positive integer, and  their generating functions. A partition of an positive integer $n$ can be defined as finite sequence of positive integers $(\delta_1, \delta_2, \cdots, \delta_k)$ such that $\sum_{j=1}^k\delta_j=n;\quad \delta_j\ge \delta_{j+1},$ where $\delta_j$ are called parts or summands of the partition. The number of partitions of $n$ is usually denoted by $p(n)$. The generating function for the partition function $p(n)$ is given by Euler~\cite{LE} as \begin{equation}\label{pn}\sum_{n=0}^{\infty}p(n)q^{n} = \frac{1}{(q;q)_{\infty}}=\frac{1}{\ell_1}, \qquad p(0)=1.\end{equation}
Euler~\cite{AA} provided the following recurrence relation for finding the values of the partition function $p(n)$:
\begin{align*}
& p(n)-p(n-1)-p(n-2)+p(n-5)+p(n-7)-p(n-12)-p(n-15)+.....\\
&+(-1)^kp\Big(n-k(3k-1)/2\Big)+(-1)^kp\Big(n-k(3k+1)/2\Big)+...= \left\{\begin{array}{cc}
1, \quad
if~ n=0, \\
\hspace{.3cm}0, \quad \mbox{otherwise}.
\end{array}
\right.
\end{align*}where the numbers of the form $k(3k\pm1)/2$ are called pentagonal numbers. For more recurrence relations of different partition functions one can see \cite{EJ, N, O, S}. Ramanujan ~\cite{RS} offered following congruences for $p(n)$:
$$p(5n+4)\equiv 0\pmod 5, \quad   p(7n+5)\equiv 0\pmod 7\quad \mbox{and}\quad    p(11n+6)\equiv 0\pmod{11}.$$
In a letter to Hardy, Ramanujan \cite{BR} introduced general partition function $p_r(n)$ as
\begin{equation}\label{e7}
\sum_{n=0}^{\infty}p_r(n)q^n=\dfrac{1}{(q;q)_{\infty}^r}=\frac{1}{\ell_1^r},
\end{equation}where $r$ is any non-zero integer. $p_1(n)$ is the partition function $p(n)$ defined in \eqref{pn}. For $r>1$,  $p_{r}(n)$ is the  colour partition function of an integer $n\ge 1$ in which each part in the partitions of $n$ is assumed to have $r$ different colours and all of them are considered as distinct. 
For $r=-1$, $p_r(n)$ is the  Euler's pentagonal number theorem and can be combinatorially interpretated in terms of partition of integers as
\begin{equation}\label{mr}p_{-1}(n)=p(n, e)-p(n, o)=\begin{cases}(-1)^m  & \text{if}~ n=m(3m\pm1)/2 \\
0 & \text{otherwise}	
\end{cases},\end{equation} where $p(n, e)$ (resp. $p(n, o)$) is the number of partitions of $n$ with even (resp. odd) number of distinct parts. For $r<-1$, it is easily seen that \begin{equation}\label{mr1}p_{r}(n)=p_r(n, e)-p_r(n, o),\end{equation}where $p_{r}(n, e)$ (resp. $p_{r}(n, o)$) is the number of partitions of $n$ with even (resp. odd) number of distinct parts and each part has $r$ colours. It  is useful to note here that, for positive integers $r$, $s$ and $t$, $$\dfrac{1}{(q^t;q^s)^r}$$ denotes the generating function of the number of partitions of a positive integer such that parts are $\equiv t\pmod{s}$  and each part has  $r$ distinct colours.

To prove  recurrence relations,  we will also use the  restricted partition functions $\bar{p}_{r}(n)$, $p_{rd}(n)$ and $A_{4}(n)$, where $\bar{p}_{r}(n)$ denotes the number of overpartitions of $n$ with $r$ copies, $p_{rd}(n)$ denotes the number of partition of $n$ into distinct parts with $r$ copies and $A_{4}(n)$ denotes the number of $4$-regular partitions of $n$ respectively. The corresponding generating functions are given by 
\begin{align}\label{p1}
&\sum_{n=0}^{\infty}\bar{p}_{r}(n)q^n={\left(\dfrac{\ell_2}{\ell_1^2}\right)}^r,\\
\label{p2}
&\sum_{n=0}^{\infty}p_{rd}(n)q^n={\left(\dfrac{\ell_2}{\ell_1}\right)}^r,\\
\label{p3}
&\sum_{n=0}^{\infty}A_4(n)q^n=\dfrac{\ell_4}{\ell_1}.
\end{align}

\section{Preliminaries}
Ramanujan defined general theta-function $f(c,d)$ \cite[p. 34, (18.1)]{BBC} as
\begin{equation}\label{eq3}
f(c,d) = \sum_{m={-}\infty}^{\infty}c^{m(m+1)/2}d^{m(m-1)/2},  \quad                |cd|<1.
\end{equation}Three useful special cases \cite[p. 35, Entry 18]{BBC} of $f(c,d)$ are given by 
\begin{equation}\label{eq4}
\phi(q) := f(q,q) = \sum_{m={-}\infty}^{\infty}q^{{m}^2}= \frac{\ell_{2}^{5} }{{\ell_{1}^2}{\ell_{4}^2}},
\end{equation} 
\begin{equation}\label{eq5}
\psi(q) := f(q,q^3) = \sum_{m=0}^{\infty}q^{m(m+1)/2} = \frac{\ell_{2}^2}{\ell_{1}}
\end{equation}and
\begin{equation}\label{eq6}
f(-q) := f(-q,-q^2) = \sum_{m=-\infty}^{\infty}(-1)^m{q}^{m(3m-1)/2} = \ell_{1}.
\end{equation} The product representations in right hand side of \eqref{eq4}-\eqref{eq6} are the consequences of the Jacobi's triple product identity given by
\begin{equation}\label{jaco}f(c,d)= (-c;cd)_{\infty}(-d;cd)_{\infty}(cd;cd)_{\infty}.\end{equation}By using elementary $q$-operations, it is easily seen that
\begin{equation}\label{J1}
\phi(-q)=\frac{(q;q)_\infty}{(-q;q)_\infty} = \frac{(q;q)_{\infty}^2}{(q ^2;q^2)_{\infty}} = \frac{\ell_{1}^2}{\ell_{2}}.
\end{equation}

In some of the proofs, we will also  use the  Jacobi's identity~\cite[p. 39, Entry 24]{BBC} given by 
\begin{equation}\label{j}
 \ell_1^3=\sum_{k=0}^{\infty}(-1)^k(2k+1)q^{k(k+1)/2}.\end{equation}
 
 \begin{lemma} [{\cite[Theorem 2.1]{CG}}] If p is an odd prime, then
 \begin{equation}\label{eq7}
 \psi(q) = \sum_{m=0}^{(p-3)/2} q^{(m^{2}+m)/2} f\left(q^{\left(p^{2}+(2m+1)p\right)/2}, q^{\left(p^{2}-(2m+1)p\right)/2}\right) + q^{(p^{2}-1)/8}\psi(q^{p^2}).
 \end{equation}
 Furthermore, $\dfrac{m^{2}+m}{2} \not\equiv \dfrac{p^{2}-1}{8} \pmod{p}\quad for\quad 0\leq m \leq \dfrac{p-3}{2}.$
 \end{lemma}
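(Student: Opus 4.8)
The plan is to prove this $p$-dissection of $\psi(q)$ directly from the series definition $\psi(q)=\sum_{n=0}^{\infty}q^{n(n+1)/2}$ recorded in \eqref{eq5}, by sorting the triangular-number exponents according to the residue of the index modulo $p$. The key elementary facts are that the triangular numbers $T_n:=n(n+1)/2$ satisfy $8T_n+1=(2n+1)^2$ and $T_{-n-1}=T_n$, so that passing to a bilateral sum gives $\sum_{n\in\mathbb{Z}}q^{T_n}=2\psi(q)$. Working with the bilateral sum is what makes the residue bookkeeping symmetric and lets the theta function $f(c,d)$ of \eqref{eq3} emerge naturally.

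First I would write $n=pj+r$ with $0\le r\le p-1$ and $j\in\mathbb{Z}$, giving $\sum_{n\in\mathbb{Z}}q^{T_n}=\sum_{r=0}^{p-1}\sum_{j\in\mathbb{Z}}q^{T_{pj+r}}$. For fixed $r$, a complete-the-square computation on the exponent, using $2(pj+r)+1=2pj+(2r+1)$ and $8T_{pj+r}+1=(2pj+2r+1)^2$, shows that $\sum_{j\in\mathbb{Z}}q^{T_{pj+r}}=q^{(r^2+r)/2}\,f\!\left(q^{(p^2+(2r+1)p)/2},q^{(p^2-(2r+1)p)/2}\right)$, matching the summand in \eqref{eq7} once one checks that the two prescribed exponents are exactly the coefficients of $j(j+1)/2$ and $j(j-1)/2$ produced by the definition \eqref{eq3}; note that $cd=q^{p^2}$ here, so convergence is clear.

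Next I would exploit the involution $n\mapsto -n-1$, which preserves $T_n$ and sends the residue $r$ to $p-1-r$; hence the $r$- and $(p-1-r)$-contributions coincide. The residues therefore pair off as $\{r,\,p-1-r\}$ for $0\le r\le(p-3)/2$, leaving the single fixed residue $r=(p-1)/2$. For this fixed residue one has $2(pj+(p-1)/2)+1=p(2j+1)$, so its contribution is $\sum_{s\text{ odd}}q^{(p^2s^2-1)/8}=q^{(p^2-1)/8}\sum_{s\text{ odd}}(q^{p^2})^{(s^2-1)/8}=2q^{(p^2-1)/8}\psi(q^{p^2})$. Collecting the paired terms (each counted twice) together with this fixed term and dividing by $2$ yields \eqref{eq7}.

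Finally, for the ``furthermore'' assertion I would reduce modulo $p$: after multiplying by $8$ and using $p^2\equiv0\pmod p$, the congruence $(m^2+m)/2\equiv(p^2-1)/8\pmod p$ is equivalent to $(2m+1)^2\equiv0\pmod p$, i.e. $2m+1\equiv0\pmod p$; but $1\le 2m+1\le p-2$ for $0\le m\le(p-3)/2$, so no such $m$ exists. I expect the only delicate points to be the two complete-the-square identities (matching the $f$-summand and isolating $\psi(q^{p^2})$) and careful tracking of the factor of $2$ arising from the bilateral sum and the residue pairing; the remaining manipulations are routine.
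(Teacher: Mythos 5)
Your proposal is correct. Note that the paper itself offers no proof of this lemma: it is quoted verbatim from Cui and Gu \cite[Theorem 2.1]{CG}, so there is no internal argument to compare against. Your derivation is the standard one and is essentially the proof given in that reference: pass to the bilateral sum $\sum_{n\in\mathbb{Z}}q^{n(n+1)/2}=2\psi(q)$ via the involution $n\mapsto -n-1$, dissect by the residue $r$ of $n$ modulo $p$, and observe that for $c=q^{(p^2+(2r+1)p)/2}$, $d=q^{(p^2-(2r+1)p)/2}$ the exponent in \eqref{eq3} is $\tfrac{c+d}{2}$-indexed by $\tfrac{p^2}{2}j^2+\tfrac{(2r+1)p}{2}j$, which is exactly $T_{pj+r}-T_r$; the involution pairs $r$ with $p-1-r$, leaving the fixed residue $r=(p-1)/2$ to produce $2q^{(p^2-1)/8}\psi(q^{p^2})$, and dividing by $2$ gives \eqref{eq7}. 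All the details you flag as delicate check out: $a+b=p^2$ and $a-b=(2r+1)p$ determine the two theta arguments, $|cd|=|q|^{p^2}<1$ ensures convergence, and the factor of $2$ is accounted for consistently. The ``furthermore'' reduction to $(2m+1)^2\equiv 0\pmod p$ with $1\le 2m+1\le p-2$ is also exactly right. In short, you have supplied a complete, self-contained proof of a result the paper only cites.
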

 \begin{lemma}[{\cite[Theorem 2.2]{CG}}] If $p\geq5$ is a prime, then
 \begin{multline}\label{eq8}
 \ell_1=\sum_{\substack{t={-(p-1)/2} \\ t \ne {(\pm p-1)/6}}}^{(p-1)/2} (-1)^{t} q^{(3t^{2}+t)/2} f\left(-q^{\left(3p^{2}+(6t+1)p\right)/2}, -q^{\left(3p^{2}-(6t+1)p\right)/2}\right)\\ + (-1)^{(\pm p-1)/6} q^{(p^{2}-1)/24} f_{p^2},
 \end{multline} where
 \begin{equation*}
 \dfrac{\pm p-1}{6}
 = \left\{
 \begin{array}{cc}
 \dfrac{(p-1)}{6} \quad
  if~ p \equiv 1\pmod{6}, \\
 \dfrac{(-p-1)}{6} \quad
  if~ p \equiv -1\pmod{6}.
 \end{array}
 \right.
 \end{equation*} Furthermore, if $\dfrac{-p-1}{2} \leq t \leq \dfrac{p-1}{2} \quad and \quad t \ne \dfrac{\pm p-1}{2}, \quad then  \quad \dfrac{3t^{2}+t}{2} \not\equiv \dfrac{p^{2}-1}{24} \pmod{p}.$
 \end{lemma}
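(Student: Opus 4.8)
The plan is to obtain the $p$-dissection directly from Euler's pentagonal number theorem by sorting the summation index into residue classes modulo $p$. Starting from \eqref{eq6} and replacing the summation variable by its negative, I would first record the equivalent form
\begin{equation*}
\ell_1 = f(-q) = \sum_{n=-\infty}^{\infty}(-1)^n q^{(3n^2+n)/2}.
\end{equation*}
Since $p$ is odd, every integer $n$ is uniquely $n=pm+t$ with $m\in\mathbb{Z}$ and $t$ in the complete residue system $-(p-1)/2\le t\le (p-1)/2$, and moreover $(-1)^n=(-1)^{pm+t}=(-1)^m(-1)^t$ because $(-1)^{pm}=(-1)^m$.

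Next I would substitute $n=pm+t$ and peel off the part of the exponent depending only on $t$. A direct expansion gives
\begin{equation*}
\frac{3n^2+n}{2}=\frac{3t^2+t}{2}+\frac{3p^2m^2+(6t+1)pm}{2},
\end{equation*}
so that summing over $m$ for each fixed $t$ yields
\begin{equation*}
\ell_1=\sum_{t=-(p-1)/2}^{(p-1)/2}(-1)^t q^{(3t^2+t)/2}\sum_{m=-\infty}^{\infty}(-1)^m q^{(3p^2m^2+(6t+1)pm)/2}.
\end{equation*}
The inner sum is a theta function: setting $a_t=(3p^2+(6t+1)p)/2$ and $b_t=(3p^2-(6t+1)p)/2$, one has $a_t+b_t=3p^2$ and $a_t-b_t=(6t+1)p$, so by the definition \eqref{eq3} the exponent $a_t\,m(m+1)/2+b_t\,m(m-1)/2$ equals exactly $(3p^2m^2+(6t+1)pm)/2$ with accompanying sign $(-1)^m$. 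Hence the inner sum is $f(-q^{a_t},-q^{b_t})$, reproducing the generic terms of \eqref{eq8}; since $|6t+1|\le 3p-2<3p$ throughout the range, both $a_t$ and $b_t$ are strictly positive and each theta function is legitimate.

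It remains to isolate the degenerate term. The inner theta function collapses to $f_{p^2}$ precisely when $\{a_t,b_t\}=\{p^2,2p^2\}$, i.e. when $(6t+1)p=\pm p^2$, that is $6t+1=\pm p$. As $6t+1$ is odd and $|6t+1|<3p$, the only multiples of $p$ it can equal are $\pm p$, and $\gcd(6,p)=1$ forces the unique admissible index $t=(\pm p-1)/6$, the sign being determined by $p\equiv\pm1\pmod 6$; this is exactly the index excluded from the sum in \eqref{eq8}. For that value of $t$ a short computation gives $\frac{3t^2+t}{2}=\frac{(p-1)(p+1)}{24}=\frac{p^2-1}{24}$ in both cases, the sign is $(-1)^{(\pm p-1)/6}$, and $f(-q^{p^2},-q^{2p^2})=(q^{p^2};q^{p^2})_\infty=f_{p^2}$, which produces the final isolated term of \eqref{eq8}.

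For the concluding assertion I would use the key identity $24\cdot\frac{3t^2+t}{2}=(6t+1)^2-1$. Since $p^2\equiv0\pmod p$ we have $\frac{p^2-1}{24}\equiv -\frac{1}{24}\pmod p$, so $\frac{3t^2+t}{2}\equiv\frac{p^2-1}{24}\pmod p$ is equivalent to $(6t+1)^2\equiv0$, i.e. $6t+1\equiv0\pmod p$, whose only solution in the residue system is the excluded index $t=(\pm p-1)/6$; hence the asserted non-congruence holds for all remaining $t$. I expect the only real bookkeeping obstacle to be the parity/range argument pinning $6t+1=\pm p$ exactly (rather than merely modulo $p$) together with the case split $p\equiv\pm1\pmod 6$ in identifying the degenerate index; once the substitution $n=pm+t$ and the identity $24\cdot\frac{3t^2+t}{2}=(6t+1)^2-1$ are in place, the remaining steps are routine verifications.
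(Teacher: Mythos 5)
Your proof is correct, but there is nothing in the paper to compare it against: the lemma is stated with a citation to Cui and Gu \cite[Theorem 2.2]{CG} and no proof is given here. Your argument --- rewriting \eqref{eq6} as the bilateral series $\sum_{n}(-1)^nq^{(3n^2+n)/2}$, splitting $n=pm+t$ over the balanced residue system $-(p-1)/2\le t\le (p-1)/2$, identifying each inner sum as $f(-q^{a_t},-q^{b_t})$ via $a_t+b_t=3p^2$, $a_t-b_t=(6t+1)p$, and isolating the degenerate index $6t+1=\pm p$ where the theta function collapses to $\ell_{p^2}$ --- is the standard $p$-dissection proof and is essentially the argument in the cited source (Cui and Gu package the residue splitting as an instance of a general addition formula for $f(a,b)$, but the computation is the same). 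The verifications all hold: $(-1)^{pm+t}=(-1)^m(-1)^t$ since $p$ is odd; $|6t+1|\le 3p-2$ keeps both exponents positive; at the degenerate index $(3t^2+t)/2=(p^2-1)/24$; and the closing non-congruence follows from $24\cdot\tfrac{3t^2+t}{2}=(6t+1)^2-1$ with $24$ invertible modulo $p\ge 5$, the parity of $6t+1$ excluding $\pm 2p$, and $6t+1\ne 0$. (The only blemishes are in the paper's own statement, not your proof: the final clause should read $-(p-1)/2\le t\le (p-1)/2$ and $t\ne(\pm p-1)/6$.)
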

 
  \begin{lemma}[{\cite[Lemma 2.3]{AB}}] If p is an odd prime, then
 \begin{multline}\label{eq9}
 \ell_{1}^3=\sum_{\substack{k=0 \\ k\ne (p-1)/2}}^{(p-1)} (-1)^k q^{k(k+1)/2} \sum_{n=0}^{\infty} (-1)^{n} (2pn+2k+1) q^{pn(pn+2k+1)/2} \\ + p(-1)^{(p-1)/2} q^{(p^{2}-1)/8} f_{p^2}^3.
 \end{multline} Furthermore, if $k \ne \dfrac{p-1}{2},\quad 0\leq k \leq p-1, \quad then\quad \dfrac{k^{2}+k}{2} \not\equiv \dfrac{p^{2}-1}{8} \pmod{p}.$
 \end{lemma}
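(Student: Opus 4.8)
The plan is to start from Jacobi's identity \eqref{j} and dissect its summation index according to residue classes modulo $p$. Writing each nonnegative integer $k$ uniquely as $k=pn+k_0$ with $0\le k_0\le p-1$ and $n\ge0$, I would substitute into \eqref{j} and simplify the three ingredients of the general term. Since $p$ is odd, $(-1)^k=(-1)^{pn+k_0}=(-1)^n(-1)^{k_0}$; the linear factor becomes $2k+1=2pn+2k_0+1$; and the triangular exponent factors as
\begin{equation*}
\frac{k(k+1)}{2}=\frac{k_0(k_0+1)}{2}+\frac{pn(pn+2k_0+1)}{2}.
\end{equation*}
Collecting the terms by the value of $k_0$ then gives
\begin{equation*}
\ell_1^3=\sum_{k_0=0}^{p-1}(-1)^{k_0}q^{k_0(k_0+1)/2}\sum_{n=0}^{\infty}(-1)^n(2pn+2k_0+1)q^{pn(pn+2k_0+1)/2},
\end{equation*}
which already has the asserted shape except that it still contains the residue $k_0=(p-1)/2$.

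The crucial observation is that this exceptional residue is exactly the one for which $2k_0+1=p$. For $k_0=(p-1)/2$ the inner linear factor becomes $2pn+p=p(2n+1)$ and the inner exponent collapses to $pn(pn+p)/2=p^2n(n+1)/2$, so the inner sum reduces to
\begin{equation*}
p\sum_{n=0}^{\infty}(-1)^n(2n+1)q^{p^2 n(n+1)/2}.
\end{equation*}
Applying Jacobi's identity \eqref{j} once more, now with $q$ replaced by $q^{p^2}$, recognizes this inner sum as $p\,\ell_{p^2}^3=p\,f_{p^2}^3$. Since the prefactor attached to this residue is $(-1)^{(p-1)/2}q^{(p^2-1)/8}$ (because $\tfrac{1}{2}\cdot\tfrac{p-1}{2}\cdot\tfrac{p+1}{2}=\tfrac{p^2-1}{8}$), the $k_0=(p-1)/2$ contribution is precisely $p(-1)^{(p-1)/2}q^{(p^2-1)/8}f_{p^2}^3$, the isolated term in \eqref{eq9}. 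Removing it from the outer sum leaves exactly the double sum over $k\ne(p-1)/2$.

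For the ``Furthermore'' claim I would argue by contradiction: suppose $\tfrac{k^2+k}{2}\equiv\tfrac{p^2-1}{8}\pmod p$ for some $k$ with $0\le k\le p-1$. Multiplying by $8$ (permissible since $p$ is odd, so $\gcd(8,p)=1$) and using $p^2\equiv0\pmod p$ gives $4k^2+4k+1\equiv0\pmod p$, that is $(2k+1)^2\equiv0\pmod p$. As $p$ is prime this forces $2k+1\equiv0\pmod p$, hence $k\equiv\tfrac{p-1}{2}\pmod p$, and the constraint $0\le k\le p-1$ pins this down to $k=(p-1)/2$. Thus no other residue can share the exponent $(p^2-1)/8$ modulo $p$, which is precisely what guarantees that \eqref{eq9} is a genuine $p$-dissection.

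The computation is essentially bookkeeping once the index split is in place; the points requiring care are the sign identity $(-1)^{pn}=(-1)^n$ (which uses the oddness of $p$) and the recognition that the exceptional residue re-sums, via a second application of \eqref{j}, into a perfect cube. That re-summation is the conceptual heart of the argument and the step I expect to be the main thing to get right.
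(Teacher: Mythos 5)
Your proof is correct and complete: the index split $k=pn+k_0$, the sign identity $(-1)^{pn}=(-1)^n$ for odd $p$, the exponent factorization, the re-summation of the residue $k_0=(p-1)/2$ into $p\,\ell_{p^2}^3$ via a second application of \eqref{j}, and the quadratic argument $(2k+1)^2\equiv 0\pmod{p}$ for the ``Furthermore'' claim are all sound. The paper offers no proof of its own --- the lemma is quoted verbatim with a citation to \cite{AB} --- and your dissection of Jacobi's identity is precisely the standard argument used in that source, so there is nothing to add.
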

 \begin{lemma} We have
 \begin{align}
 \label{e2} 
 &\dfrac{\ell_2}{\ell_1^2}=\dfrac{\ell_6^4 \ell_9^6}{\ell_3^8 \ell_{18}^3}+2q\dfrac{\ell_6^3 \ell_9^3}{\ell_3^7}+4q^2\dfrac{\ell_6^2 \ell_{18}^3}{\ell_3^6},\\
\label{e4}
&\dfrac{1}{\ell_1\ell_2}=\dfrac{\ell_9^9}{\ell_3^6\ell_6^2\ell_{18}^3}+q\dfrac{\ell_9^6}{\ell_3^5\ell_6^3}+3q^2\dfrac{\ell_9^3\ell_{18}^3}{\ell_3^4\ell_6^4}-2q^3\dfrac{\ell_{18}^6}{\ell_3^3\ell_6^5}+4q^4\dfrac{\ell_{18}^9}{\ell_3^2\ell_6^6\ell_9^3},\\
\label{c1}
&\dfrac{\ell_4}{\ell_1}=\dfrac{\ell_{12}\ell_{18}^4}{\ell_3^3\ell_{36}^2}+q\dfrac{\ell_6^2\ell_9^3\ell_{36}}{\ell_3^4\ell_{18}^2}+2q^2\dfrac{\ell_6\ell_{18}\ell_{36}}{\ell_3^3}.
 \end{align}
 \end{lemma}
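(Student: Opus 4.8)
The plan is to treat all three identities as $3$-dissections and to prove them by a single device: write each left-hand side as a reciprocal (or near-reciprocal) of a classical theta function, dissect that theta function modulo $3$, and then rationalise the reciprocal by multiplying numerator and denominator by its two cube-root-of-unity conjugates. The key auxiliary fact I would isolate first is the product formula: for $\omega=e^{2\pi i/3}$, writing $g(\omega^j q)$ for the result of replacing $q$ by $\omega^j q$ in a $q$-product $g$, one has
$$\prod_{j=0}^{2}\ell_k(\omega^j q)=\frac{\ell_{3k}^{4}}{\ell_{9k}}\qquad(3\nmid k),$$
which follows by applying $(1-x)(1-\omega x)(1-\omega^2 x)=1-x^3$ term by term to $(q^k;q^k)_\infty$. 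Taking $k=1,2,4$ turns any symmetric product of $\ell_1,\ell_2,\ell_4$ over the three conjugates into a genuine function of $q^3$, and in particular explains why $\ell_9,\ell_{18},\ell_{36}$ appear on the right-hand sides.

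For \eqref{e2} I would start from \eqref{J1}, which gives $\ell_2/\ell_1^{2}=1/\phi(-q)$. Splitting the index in $\phi(-q)=\sum_n(-1)^nq^{n^2}$ according to $n\equiv 0,\pm1\pmod 3$ yields the two-term dissection $\phi(-q)=X_0-2qX_1$ with $X_0=\phi(-q^9)=\ell_9^{2}/\ell_{18}$ and $X_1=f(-q^3,-q^{15})$; the Jacobi triple product \eqref{jaco} then identifies $X_1=\ell_3\ell_{18}^{2}/(\ell_6\ell_9)$. Writing $1/\phi(-q)=\phi(-\omega q)\phi(-\omega^2 q)/D$ with $D=\phi(-q)\phi(-\omega q)\phi(-\omega^2 q)=X_0^{3}-8q^{3}X_1^{3}$, the numerator expands as $X_0^{2}+2qX_0X_1+4q^{2}X_1^{2}$, which already displays the residues $0,1,2$ and the coefficients $1,2,4$ of \eqref{e2}. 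Finally the product formula gives $D=(\ell_3^{4}/\ell_9)^{2}/(\ell_6^{4}/\ell_{18})=\ell_3^{8}\ell_{18}/(\ell_6^{4}\ell_9^{2})$, and dividing the three numerator pieces by $D$ reproduces exactly the quotients in \eqref{e2}.

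Identities \eqref{e4} and \eqref{c1} would follow the same three steps with a different base theta function. For \eqref{e4} I would use $1/(\ell_1\ell_2)=\phi(-q)/\ell_1^{3}$ together with the $p=3$ instance of the dissection of $\ell_1^{3}$ recorded in Lemma 2.3 (equivalently Jacobi's identity \eqref{j}), whose residue-$1$ component is simply $-3q\ell_9^{3}$; the norm $\prod_j(\ell_1\ell_2)(\omega^j q)=\ell_3^{4}\ell_6^{4}/(\ell_9\ell_{18})$ supplies the common denominator after rationalising. For \eqref{c1} I would rewrite $\ell_4/\ell_1=\psi(q)/\phi(-q^2)$, dissect $\psi(q)=f(q^3,q^6)+q\,\psi(q^9)$ by the $p=3$ instance of Lemma 2.1, and reuse \eqref{e2} with $q$ replaced by $q^2$ for the factor $1/\phi(-q^2)=\ell_4/\ell_2^{2}$; here the product formula with $k=4$ is precisely what introduces $\ell_{12}$ and $\ell_{36}$.

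The main obstacle is the coefficient bookkeeping in \eqref{e4} and \eqref{c1}. Unlike $\phi(-q)$, the relevant theta function now has a genuine three-term dissection (for $\ell_1$), or must be convolved with an eta factor that is not a function of $q^3$ (for $\ell_4/\ell_2^{2}$), so the rationalised numerator is a full quadratic in the dissection pieces with all cross terms present; this is what produces the five terms $1,1,3,-2,4$ of \eqref{e4} and the three terms $1,1,2$ of \eqref{c1}. Extracting these exact integers, and collapsing each ratio $(\text{piece}\times\text{piece})/D$ to the single $\ell$-quotient shown, requires a handful of auxiliary theta evaluations of the dissection components (of the same flavour as $f(-q^3,-q^{15})=\ell_3\ell_{18}^{2}/(\ell_6\ell_9)$), obtained from \eqref{jaco} and checked by matching low-order coefficients. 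Assembling these correctly, rather than any single conceptual step, is where the real work and the risk of arithmetic error lie.
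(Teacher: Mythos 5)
The paper does not actually prove this lemma: it cites Hirschhorn--Sellers for \eqref{e2}, Chan for \eqref{e4}, and Hirschhorn's book for \eqref{c1}, so your proposal is being compared with the published proofs rather than with anything in the text. For \eqref{e2} your argument is complete and correct, and it is essentially the standard one: the norm formula $\prod_{j=0}^{2}\ell_k(\omega^jq)=\ell_{3k}^{4}/\ell_{9k}$ is right, the dissection $\phi(-q)=\phi(-q^{9})-2q\,f(-q^{3},-q^{15})$ and the evaluation $f(-q^{3},-q^{15})=\ell_3\ell_{18}^{2}/(\ell_6\ell_9)$ follow from \eqref{jaco}, and dividing $X_0^{2}+2qX_0X_1+4q^{2}X_1^{2}$ by $D=\ell_3^{8}\ell_{18}/(\ell_6^{4}\ell_9^{2})$ reproduces the three quotients term by term.

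The gap is in \eqref{e4}, and to a lesser extent \eqref{c1}, and it is not mere bookkeeping. Your route through $1/(\ell_1\ell_2)=\phi(-q)/\ell_1^{3}$ needs the full $3$-dissection of $\ell_1^{3}$, not only its residue-$1$ piece. That piece is indeed $-3q\ell_9^{3}$ by \eqref{j}, but the residue-$0$ piece equals $a(q^{3})\ell_3$, where $a(q)=\sum_{m,n}q^{m^{2}+mn+n^{2}}$ is the Borwein cubic theta function; it is \emph{not} an eta quotient, so after rationalising, the numerator contains $a(q^{3})^{2}$ and $a(q^{3})$ cross terms. Collapsing those into the five pure eta quotients of \eqref{e4} requires the cubic theta-function identities (or, as in Chan's actual proof, the reciprocal identity for Ramanujan's cubic continued fraction) --- genuinely extra input, not ``auxiliary theta evaluations of the same flavour as $f(-q^{3},-q^{15})$.'' Your alternative of rationalising $\ell_1\ell_2$ directly runs into the same wall, since the $3$-dissection of $\ell_1\ell_2$ is likewise not obtainable from \eqref{jaco} alone. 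A milder version of the problem occurs in \eqref{c1}: multiplying the two-term dissection of $\psi(q)$ by the three-term dissection of $\ell_4/\ell_2^{2}$ produces six products spread over the residues $0,1,2,0,1,2$, and each residue class must collapse two of them into the single quotient displayed; each collapse is itself a nontrivial theta identity that you have not supplied. In short: \eqref{e2} is proved; \eqref{e4} and \eqref{c1} are plausible plans whose decisive identities are still missing.
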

 The proof of \eqref{e2} can be seen from \cite{HD1}. The identity \eqref{e4} was obtained by Chan \cite{CH}. The identity \eqref{c1} follows from Equations (33.2.1) and (33.2.5) in \cite{HD}.

In addition to above identities, we need the following congruences which are easy consequences of the binomial theorem: 
 For any prime $p$ and positive integers $n$ and $t$, we have
 	\begin{align}\label{lm1}
 	& \ell_{n}^{tp} \equiv \ell_{np}^t\pmod{p},\\
 	\label{lm2}
 	& \ell_1^{2^t}\equiv\ell_2^{2^{t-1}}\pmod{2^t}.
 	\end{align}
 	In order to state our congruences, we will also use Legendre's symbol which is defined as follows:
 	
 	 Let $p$ be any odd prime and $\omega$ be any integer relatively prime to $p$, then the Legendre's symbol $\left(\dfrac{\omega}{p}\right)$ is defined by  
 	\begin{equation*}
 	\left(\dfrac{\omega}{p}\right)
 	= \left\{
 	\begin{array}{cc}
 	\hspace{-.5cm}1,~\text{if $\omega$ is quadratic residue of $p$}, \\
 	-1,~\text{if $\omega$ is quadratic non-residue of $p$} .
 	\end{array}
 	\right.
 	\end{equation*}
 	We will also use the notation, for any real number $x$, $$\lfloor x\rfloor=k,\quad \mbox{where}\quad k\leq x<k+1\quad \mbox{and}\quad k\quad \mbox{is an integer}.$$

\section{Results on $v(q)$}
Throughout the section, we set $\sum_{n=0}^{\infty}P_v(n)q^n=v(q)$, where $v(q)$ is defined in \eqref{v}.
\begin{theorem}\label{thm1}We have
$$\sum_{n=0}^{\infty}P_v(2n+1)q^n=\dfrac{\ell_4^3}{\ell_1\ell_2}.$$
\end{theorem}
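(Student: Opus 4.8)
The plan is to extract the odd part of $v(q)=\sum_{n\ge0}P_v(n)q^n$ by the standard symmetrization $\tfrac12\bigl(v(q)-v(-q)\bigr)=\sum_{n\ge0}P_v(2n+1)q^{2n+1}$, and then to identify this odd part with a single eta-quotient. Replacing $q$ by $q^2$ in the claimed formula and multiplying by $q$ shows that Theorem~\ref{thm1} is equivalent to the single identity $\tfrac12\bigl(v(q)-v(-q)\bigr)=q\,\dfrac{\ell_8^3}{\ell_2\ell_4}$. So the whole problem reduces to proving this one relation between $v(q)-v(-q)$ and a theta-quotient.

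First I would compute $v(-q)$ directly from the defining series \eqref{v}. Under $q\mapsto-q$ one has $(-q;q^2)_n\mapsto(q;q^2)_n$, $(q;q^2)_{n+1}\mapsto(-q;q^2)_{n+1}$, and $q^{(n+1)^2}\mapsto(-1)^{n+1}q^{(n+1)^2}$, so that $v(-q)=\sum_{n\ge0}(-1)^{n+1}\dfrac{q^{(n+1)^2}(q;q^2)_n}{(-q;q^2)_{n+1}}$. Subtracting gives $v(q)-v(-q)=\sum_{n\ge0}q^{(n+1)^2}\Bigl[\dfrac{(-q;q^2)_n}{(q;q^2)_{n+1}}+(-1)^{n}\dfrac{(q;q^2)_n}{(-q;q^2)_{n+1}}\Bigr]$, and the task becomes summing this combination in closed form.

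The hard part will be this resummation: $v(q)$ is a genuine mock theta function, so its non-modular (mock/partial-theta) component must cancel in the difference $v(q)-v(-q)$, leaving a true modular form. This cancellation is the conceptual heart of the statement and is exactly why the odd coefficients, unlike the even ones, possess a clean product generating function. To carry it out I would either (i) invoke a known theta/Appell--Lerch representation of the eighth-order function $v(q)$ as recorded in Gordon--McIntosh \cite{G1}, in which the mock piece is manifestly even under $q\mapsto-q$ so that only the theta part survives the subtraction; or (ii) derive the summation directly by locating a Bailey pair, or applying a Rogers--Fine-type transformation, to the series above. Either route produces a theta-quotient, which I would then reconcile with the target $q\,\ell_8^3/(\ell_2\ell_4)$ using Jacobi's triple product \eqref{jaco} together with the evaluations \eqref{eq4}--\eqref{J1} of $\phi$, $\psi$ and $f(-q)$; this final reconciliation is routine eta-quotient bookkeeping. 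A useful check that also fixes the normalization is to expand both $\tfrac12\bigl(v(q)-v(-q)\bigr)$ and $q\,\ell_8^3/(\ell_2\ell_4)$, which agree as $q+q^3+3q^5+4q^7+\cdots$.
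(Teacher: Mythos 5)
Your setup is sound: the reduction of the theorem to $\tfrac12\bigl(v(q)-v(-q)\bigr)=q\,\ell_8^3/(\ell_2\ell_4)$ is correct, your computation of $v(-q)$ from the defining series is correct, and your structural intuition --- that $v(q)$ must split into a mock component that is even under $q\mapsto-q$ plus an explicit theta product, so that only the theta part survives in the odd coefficients --- is exactly what happens. But there is a genuine gap: that splitting is the entire content of the theorem, and you never produce it. Your route (i) says you would ``invoke a known theta/Appell--Lerch representation'' and your route (ii) says you would ``locate a Bailey pair'' or apply a Rogers--Fine transformation; neither is stated, cited precisely, or carried out. The subtracted series $\sum_{n\ge0}q^{(n+1)^2}\bigl[\tfrac{(-q;q^2)_n}{(q;q^2)_{n+1}}+(-1)^{n}\tfrac{(q;q^2)_n}{(-q;q^2)_{n+1}}\bigr]$ is left exactly where you found it, and a numerical check of four coefficients does not substitute for the resummation. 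As written, the proposal is a plan for a proof, not a proof.

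The missing ingredient is precisely the identity the paper uses, namely \eqref{e5} from Ramanujan's lost notebook (Entry 12.5.1 in Andrews--Berndt) and McIntosh: $\mu(-q^2)+4v(q)$ equals an explicit sum of two infinite products, one supported on even powers of $q$ and one of the form $4q\cdot(\text{series in }q^2)$. Since $\mu(-q^2)$ is a power series in $q^2$, the odd part of $4v(q)$ is read off immediately from the second product, which after replacing $q^2$ by $q$ is $4\,\ell_4^3/(\ell_1\ell_2)$ up to the factor of $q$. So your route (i) is viable and is in fact the paper's route, but to close the gap you must actually state this identity (or an equivalent mock-plus-theta decomposition of $v(q)$) with a verifiable reference, confirm that its mock part is even in $q$, and then do the product bookkeeping; without that, the ``cancellation of the mock part'' remains an assertion rather than an argument.
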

\begin{proof}From Ramanujan's lost notebook \cite[p. 280, Entry 12.5.1]{BB} and \cite[p. 288]{RM}, we note that 
	\begin{equation}\label{e5}
	\mu(-q^2)+4v(q)=\dfrac{(q^4;q^4)_\infty(-q^2;q^4)_{\infty}^3}{(q^2;q^4)_{\infty}^2(-q^4;q^4)_{\infty}^2}+4q\dfrac{(q^8;q^8)_{\infty}(-q^4;q^4)_\infty}{(q^4;q^8)_\infty(q^2;q^4)_\infty},  \end{equation}
	where $p_\mu(q)$ is defined as in \eqref{u}. 
Simplifying \eqref{e5}, we obtain
\begin{equation}\label{a1}
4v(q)=	-\mu(-q^2)+\dfrac{(q^4;q^4)_\infty(-q^2;q^4)_{\infty}^3}{(q^2;q^4)_{\infty}^2(-q^4;q^4)_{\infty}^2}+4q\dfrac{(q^8;q^8)_{\infty}(-q^4;q^4)_\infty}{(q^4;q^8)_\infty(q^2;q^4)_\infty}
\end{equation}Extracting the terms involving $q^{2n+1}$ from \eqref{a1} and dividing by $q$ and then replacing $q^2$ by $q$, we obtain
\begin{equation}\label{a2}
\sum_{n=0}^{\infty}P_v(2n+1)q^n=\dfrac{(q^4;q^4)_{\infty}(-q^2;q^2)_\infty}{(q^2;q^4)_\infty(q;q^2)_\infty}
\end{equation}
Now the desired results easily follows from \eqref{a2}.
\end{proof}

\begin{theorem}\label{thmv}
We have $$P_v(2n+1)={P_v}_{(d,e)}(n)-{P_v}_{(d,0)}(n),$$ where ${P_v}_{(d,e)}(n)\left(resp.\quad {P_v}_{(d,0)}(n)\right)$ denotes the number of partitions of $n$ such that
\begin{itemize}
\item[$(i)$]parts congruent to $1,3\pmod{4}$ have one colour,
\item[$(ii)$]parts congruent to $2\pmod{4}$ have two colour, 
\item[$(iii)$]parts congruent to $0\pmod{4}$ are distinct and have one colour,
\item[$(iv)$]number of parts congruent to $0\pmod{4}$ being even (resp. odd).
\end{itemize}
\end{theorem}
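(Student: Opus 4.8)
The plan is to begin from the product form $\sum_{n\ge 0}P_v(2n+1)q^n=\ell_4^3/(\ell_1\ell_2)$ furnished by Theorem~\ref{thm1} and to read the asserted combinatorial statistic directly off a factorisation of this product according to residues modulo $4$.

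First I would sort the parts in $\ell_1$ and $\ell_2$ by their class modulo $4$. Since the positive integers split into the four classes $1,2,3,0\pmod 4$, one has
$$\ell_1=(q;q^4)_\infty(q^2;q^4)_\infty(q^3;q^4)_\infty(q^4;q^4)_\infty,$$
and, collecting the even parts only,
$$\ell_2=(q^2;q^4)_\infty(q^4;q^4)_\infty.$$
Substituting these into $\ell_4^3/(\ell_1\ell_2)$ and cancelling the common factors $(q^4;q^4)_\infty=\ell_4$, the generating function collapses to
$$\sum_{n=0}^{\infty}P_v(2n+1)q^n=\frac{(q^4;q^4)_\infty}{(q;q^4)_\infty\,(q^2;q^4)_\infty^2\,(q^3;q^4)_\infty}.$$

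The next step is to interpret each factor using the conventions recorded in the Introduction. The factors $1/(q;q^4)_\infty$ and $1/(q^3;q^4)_\infty$ generate, in a single colour, the parts $\equiv 1$ and $\equiv 3\pmod 4$, which is condition~$(i)$; the factor $1/(q^2;q^4)_\infty^2$ generates the parts $\equiv 2\pmod 4$ in two colours, which is condition~$(ii)$. The remaining numerator $(q^4;q^4)_\infty=\prod_{k\ge 1}(1-q^{4k})$ is the signed generating function for partitions into \emph{distinct} multiples of $4$: expanding the product, the coefficient of $q^m$ equals $\sum(-1)^{\#\text{parts}}$ summed over partitions of $m$ into distinct parts $\equiv 0\pmod 4$. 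This supplies the distinctness in condition~$(iii)$ together with the sign demanded by condition~$(iv)$.

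Finally I would form the Cauchy product of the four factors and extract the coefficient of $q^n$. The result is a weighted enumeration of the partitions obeying $(i)$--$(iii)$, each weighted by $(-1)^{m}$, where $m$ is the number of its parts that are $\equiv 0\pmod 4$. Grouping this weighted sum by the parity of $m$ gives precisely ${P_v}_{(d,e)}(n)-{P_v}_{(d,0)}(n)$, the claimed identity. The only delicate point is this sign bookkeeping: one has to recognise that the minus signs produced by $(q^4;q^4)_\infty$ record exactly the parity of the number of parts $\equiv 0\pmod 4$, so that partitions with an even such count contribute $+1$ and those with an odd count contribute $-1$; everything else is a routine matching of generating-function factors to the stated colour conditions.
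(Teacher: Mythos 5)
Your argument is correct and is essentially the paper's own proof: the authors likewise start from Theorem~\ref{thm1}, rewrite $\ell_4^3/(\ell_1\ell_2)$ by sorting parts into residue classes modulo $4$ to obtain $(q^4;q^4)_\infty/\bigl((q,q^3;q^4)_\infty(q^2;q^4)_\infty^2\bigr)$, and then read off the colour and sign conditions exactly as you do. Your write-up is in fact more explicit than the paper's about the sign bookkeeping coming from the numerator $(q^4;q^4)_\infty$, which the paper leaves as ``the result immediately follows.''
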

\begin{proof}From Theorem \ref{thm1}, we have
\begin{equation}\label{3}
\sum_{n=0}^{\infty}P_v(2n+1)q^n=\dfrac{\ell_4^3}{\ell_1\ell_2}.
\end{equation}Changing the bases to $q^4$ in \eqref{3}, we obtain
\begin{equation}\label{v3}
\sum_{n=0}^{\infty}P_v(2n+1)q^n=\dfrac{(q^4;q^4)_{\infty}^3}{(q,q^2,q^3,q^4;q^4)_\infty(q^2,q^4;q^4)_\infty},
\end{equation}which implies
\begin{equation}\label{v4}
\sum_{n=0}^{\infty}P_v(2n+1)q^n=\dfrac{(q^4;q^4)_{\infty}}{(q,q^3;q^4)_\infty(q^2;q^4)_{\infty}^2}.
\end{equation}Now, the result immediately follows from \eqref{v4}.
\end{proof}
\begin{theorem}\label{thm2}We have
$$\hspace{-11cm}(i)\quad P_v(6n+5)\equiv0\pmod{3}.$$

\hspace{-.5cm}$(ii)$~Let $p\ge3$ be any prime such that $\left(\dfrac{-2}{p}\right)=-1$. Then for any integer $\alpha\ge0$, we have
\begin{equation}\label{7}\hspace{-4.5cm}
\sum_{n=0}^{\infty}P_v\left(2\cdot p^{2\alpha}n+\dfrac{3\cdot p^{2\alpha}+1}{4}\right)q^n\equiv\psi(q)\psi(q^2)\pmod{2},
\end{equation}
\begin{equation}\label{e8}\hspace{1.2cm}
P_v\left(2\cdot p^{2\alpha+2}n+2\cdot p^{2\alpha+1}j+\dfrac{3\cdot p^{2\alpha+2}+1}{4}\right)\equiv0\pmod{2},\quad 1\leq j\leq(p-1).
\end{equation}

\hspace{-.6cm}$(iii)$~Let $p\ge5$ be any prime such that $\left(\dfrac{-18}{p}\right)=-1$. Then for integer $\alpha\ge0$, we have
\begin{equation}\label{e9}\hspace{-5cm}
\sum_{n=0}^{\infty}P_v\left(6\cdot p^{2\alpha}n+\dfrac{19\cdot p^{2\alpha}+1}{4}\right)q^n\equiv3\ell_1\ell_6^3\pmod{6}.\end{equation}
\begin{equation}\label{e10}\hspace{1.2cm}
P_v\left(6\cdot p^{2\alpha+2}n+6\cdot p^{2\alpha+1}j+\dfrac{19\cdot p^{2\alpha+2}+1}{4}\right)\equiv0\pmod{6},\quad 1\leq j\leq(p-1).
\end{equation} 
\end{theorem}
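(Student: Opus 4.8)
The plan is to hang everything on Theorem~\ref{thm1}: writing $F(q):=\dfrac{\ell_{4}^{3}}{\ell_{1}\ell_{2}}=\sum_{n\ge0}P_v(2n+1)q^{n}$, every coefficient named in the statement is a coefficient of $F$, since $6n+5=2(3n+2)+1$ and, for all $\alpha\ge0$, both $2p^{2\alpha}n+\tfrac{3p^{2\alpha}+1}{4}$ and $6p^{2\alpha}n+\tfrac{19p^{2\alpha}+1}{4}$ are odd. Thus (i) is a single $3$-dissection of $F$, while (ii) and (iii) are obtained by first settling the case $\alpha=0$ and then inducting on $\alpha$, the inductive step being a $p$-dissection controlled by the Legendre hypotheses.

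For (i) I would reduce modulo $3$: \eqref{lm1} gives $\ell_{4}^{3}\equiv\ell_{12}\pmod 3$, and $\ell_{12}$ carries only exponents $\equiv0\pmod3$, so the residue class $2\pmod3$ of $F$ equals, modulo $3$, $\ell_{12}$ times the class $2\pmod3$ of $\dfrac{1}{\ell_{1}\ell_{2}}$. By \eqref{e4} that class is exactly $3q^{2}\dfrac{\ell_{9}^{3}\ell_{18}^{3}}{\ell_{3}^{4}\ell_{6}^{4}}$, which is divisible by $3$; extracting $q^{3n+2}$ then gives $P_v(6n+5)\equiv0\pmod3$.

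Both (ii) and (iii) reduce to a statement modulo $2$. For (ii) one has $\dfrac{F}{\psi(q)\psi(q^{2})}=\dfrac{\ell_{4}}{\ell_{2}^{2}}\equiv1\pmod2$ by \eqref{lm2}, so $F\equiv\psi(q)\psi(q^{2})\pmod2$, which is \eqref{7} at $\alpha=0$. For (iii) I would first note that every argument in \eqref{e9} and \eqref{e10} is $\equiv5\pmod6$ (as $p\ge5$ forces $24\mid p^{2\alpha}-1$), so (i) already supplies the factor of $3$, i.e. the congruences modulo $3$; the remaining content is modulo $2$. At $\alpha=0$ the class $2\pmod3$ of $F\equiv\psi(q)\psi(q^{2})$ is read off from the $3$-dissection $\psi(q)=f(q^{3},q^{6})+q\psi(q^{9})$ (the case $p=3$ of \eqref{eq7}) and its $q\mapsto q^{2}$ analogue: the only surviving cross term is $q^{2}f(q^{3},q^{6})\psi(q^{18})$, so $\sum_nP_v(6n+5)q^{n}\equiv f(q,q^{2})\psi(q^{6})\pmod2$; using $f(q,q^{2})=\dfrac{\ell_{2}\ell_{3}^{2}}{\ell_{1}\ell_{6}}$, $\psi(q^{6})=\dfrac{\ell_{12}^{2}}{\ell_{6}}$ and \eqref{lm2} this collapses to $\ell_{1}\ell_{6}^{3}$, yielding \eqref{e9} at $\alpha=0$.

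The induction on $\alpha$ uses one mechanism in both parts. Comparing the arguments at levels $\alpha$ and $\alpha+1$ shows the step amounts to extracting from $\psi(q)\psi(q^{2})$ (resp.\ $\ell_{1}\ell_{6}^{3}$) the exponents $E\equiv r\pmod{p^{2}}$ with $r=\tfrac{3(p^{2}-1)}{8}$ (resp.\ $r=\tfrac{19(p^{2}-1)}{24}$) and rescaling $q^{p^{2}}\mapsto q$. Writing each product as a double theta sum and completing the square gives $8E+3=(2a+1)^{2}+2(2b+1)^{2}$ and $24E+19=(6m-1)^{2}+2(6k+3)^{2}$; for these $r$ the right-hand side is $\equiv0\pmod{p^{2}}$, and reduction modulo $p$ gives $(2a+1)^{2}\equiv-2(2b+1)^{2}$ and $(6m-1)^{2}\equiv-18(2k+1)^{2}\pmod p$. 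Since $-18=-2\cdot3^{2}$, the hypotheses $\left(\dfrac{-2}{p}\right)=-1$ and $\left(\dfrac{-18}{p}\right)=-1$ force both squares to vanish modulo $p$, so only the principal terms of the dissections \eqref{eq7}, \eqref{eq8}, \eqref{eq9}---those for which each completed square is $\equiv0\pmod p$, the non-principal ones being kept off the progression by the ``Furthermore'' clauses---contribute; they reproduce the same product in $q^{p^{2}}$, closing the induction for \eqref{7} and \eqref{e9}. Because every surviving exponent then satisfies $E\equiv r\pmod{p^{2}}$, the intermediate classes $E\equiv r+pj\pmod{p^{2}}$ with $1\le j\le p-1$ are empty, which gives \eqref{e8} and the modulo-$2$ half of \eqref{e10} (whose modulo-$3$ half again comes from (i)). I expect the most delicate step to be (iii): multiplying the dissection \eqref{eq8} of $\ell_{1}$ (with its excluded index $(\pm p-1)/6$) by the cubed dissection \eqref{eq9} of $\ell_{6}^{3}$ (whose principal term carries a factor $p$, invisible modulo $2$), performing the progression extraction modulo $p^{2}$ while simultaneously reducing modulo $2$, and verifying that the principal-times-principal contribution is precisely $q^{r}\ell_{p^{2}}\ell_{6p^{2}}^{3}$, so that rescaling returns $\ell_{1}\ell_{6}^{3}$.
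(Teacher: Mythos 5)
Your argument is correct, and for part (ii) and for both inductions it is essentially the paper's own: the same dissection lemmas \eqref{eq7}--\eqref{eq9}, the same completed squares $(2m+1)^2+2(2k+1)^2$ and $(6t+1)^2+18(2k+1)^2$, and the same use of the Legendre hypotheses to kill everything but the principal terms; the paper merely carries out your single mod-$p^2$ extraction as two successive mod-$p$ extractions, which is also exactly how it obtains \eqref{e8} and \eqref{e10}, in the way you describe. Where you genuinely diverge is in the base cases of (i) and (iii). The paper first computes an exact $3$-dissection of $\ell_4^3/(\ell_1\ell_2)$ by combining \eqref{c1} with the two-term dissection of $\psi(q^2)$, arriving at the identity $\sum_{n\ge0}P_v(6n+5)q^n=3\,\ell_4\ell_6^3/\ell_1^3$; this one identity gives (i) on sight and, after \eqref{lm2}, the $\alpha=0$ case of \eqref{e9}. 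You instead prove (i) purely modulo $3$ (via $\ell_4^3\equiv\ell_{12}\pmod 3$ and the lone $q^2$-term of \eqref{e4}) and recover the $\alpha=0$ case of \eqref{e9} by $3$-dissecting $\psi(q)\psi(q^2)$ modulo $2$ and gluing the two moduli by CRT, using the observation that $\tfrac{19p^{2\alpha}+1}{4}\equiv5\pmod6$. I have checked the computations you rely on --- in particular $f(q,q^2)\psi(q^6)=\ell_2\ell_3^2\ell_{12}^2/(\ell_1\ell_6^2)\equiv\ell_1\ell_6^3\pmod2$ is right, and the CRT step is sound since the relevant arguments all lie in the class $5\pmod 6$ at every level $\alpha$. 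Your route avoids establishing the exact identity for $\sum P_v(6n+5)q^n$, at the cost of two separate dissection computations and the modulus bookkeeping; the paper's exact identity \eqref{2} costs more up front but is reused verbatim in the recurrence-relation theorem later in the section, so neither approach strictly dominates.
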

\begin{proof} 
(i)~Using \eqref{c1} in \eqref{3}, we obtain
\begin{equation}\label{1}
\sum_{n=0}^{\infty}P_v(2n+1)q^n=\dfrac{\ell_{12}^2\ell_{18}^6}{\ell_3^3\ell_6\ell_{36}^3}+q\dfrac{\ell_{12}\ell_6\ell_9^3}{\ell_3^4}+3q^2\dfrac{\ell_{12}\ell_{18}^3}{\ell_3^3}+q^3\dfrac{\ell_6^2\ell_9^3\ell_{36}^3}{\ell_3^4\ell_{18}^3}+2q^4\dfrac{\ell_6\ell_{36}^3}{\ell_3^3}. 
\end{equation}Extracting the terms involving $q^{3n+2}$, dividing by $q^2$ and replacing $q^3$ by $q$ from \eqref{1}, we obtain
\begin{equation}\label{2}
\sum_{n=0}^{\infty}P_v(6n+5)q^n=3\dfrac{\ell_4\ell_6^3}{\ell_1^3}.
\end{equation} Now (i) follows immediately from \eqref{2}.

(ii)~Using \eqref{lm1} in \eqref{3}, we obtain
\begin{equation}\label{e12}
\sum_{n=0}^{\infty}P_v(2n+1)q^n\equiv\psi(q)\psi(q^2)\pmod{2}.
\end{equation}which is the $\alpha=0$ case of \eqref{7}. Assume that \eqref{7} is true for some $\alpha\geq0$. Now, substituting  \eqref{eq7} in  \eqref{7}, we obtain
\begin{multline}\label{e13}
\sum_{n=0}^{\infty}P_v\left(2\cdot p^{2\alpha}n+\dfrac{3\cdot p^{2\alpha}+1}{4}\right)q^n\\ \hspace{-.2cm}\equiv \Big[\sum_{m=0}^{(p-3)/2} q^{(m^{2}+m)/2} f\left(q^{\left(p^{2}+(2m+1)p\right)/2}, q^{\left(p^{2}-(2m+1)p\right)/2}\right) + q^{(p^{2}-1)/8}\psi(q^{p^2})\Big]\\ \times \Big[\sum_{k=0}^{(p-3)/2} q^{(k^{2}+k)} f\left(q^{\left(p^{2}+(2k+1)p\right)}, q^{\left(p^{2}-(2k+1)p\right)}\right) + q^{(p^{2}-1)/4}\psi(q^{2p^2})\Big]\pmod{2}.
\end{multline}Consider the congruence
\begin{equation}\label{e14}
\left(\dfrac{m^2+m}{2}\right)+\left({k^2+k}\right)\equiv 3\left(\dfrac{p^2-1}{8}\right)\pmod{p},
\end{equation}which is equivalent to
\begin{equation}\label{e15}
(2m+1)^2+2(2k+1)^2\equiv0\pmod{p}.
\end{equation}For $\left(\dfrac{-2}{p}\right)=-1$, the congruence \eqref{e15} has only solution $m=k=\dfrac{(p-1)}{2}$. Therefore, extracting the terms involving $q^{pn+3(p^{2}-1)/8}$, divding by $q^{3(p^{2}-1)/8}$ and replacing $q^p$ by $q$ from \eqref{e13}, we obtain
\begin{equation}\label{e16}
\sum_{n=0}^{\infty}P_v\left(2\cdot p^{2\alpha+1}n+\dfrac{3\cdot p^{2\alpha+2}+1}{4}\right)q^n\equiv\psi(q^{p})\psi(q^{2p})\pmod{2}.
\end{equation}Extracting the terms involving $q^{pn}$ and replacing $q^p$ by $q$ from \eqref{e16}, we obtain
\begin{equation*}
\sum_{n=0}^{\infty}P_v\left(2\cdot p^{2\alpha+2}n+\dfrac{3\cdot p^{2\alpha+2}+1}{4}\right)q^n\equiv\psi(q)\psi(q^2)\pmod{2},
\end{equation*}which is the $\alpha+1$ case of \eqref{7}. Hence, by the method of induction we complete the proof of \eqref{7}. Again, extracting the terms involving $q^{pn+j}$, $1\leq j\leq(p-1)$ from \eqref{e16}, we arrive at \eqref{e8}. 

(iii)~Using \eqref{lm2} in \eqref{2}, we obtain
\begin{equation}\label{e18}
\sum_{n=0}^{\infty}P_v(6n+5)q^n\equiv3\ell_1\ell_6^3\pmod{6}.
\end{equation}which is the $\alpha=0$ case of \eqref{e9}. Assume that \eqref{e9} is true for some $\alpha\geq0$. Now, substituting \eqref{eq8} and \eqref{eq9} in \eqref{e9}, we obtain
\begin{multline}\label{22}
\sum_{n=0}^{\infty}P_v\left(6\cdot p^{2\alpha}n+\dfrac{19\cdot p^{2\alpha}+1}{4}\right)q^n\\ \hspace{.2cm}\equiv  3\Big[\sum_{\substack{t={-(p-1)/2} \\ t \ne {(\pm p-1)/6}}}^{(p-1)/2} (-1)^{t} q^{(3t^{2}+t)/2} f\left(-q^{3p^{2}+(6t+1)p/2}, -q^{3p^{2}-(6t+1)p/2}\right)+(-1)^{(\pm p-1)/6} q^{(p^{2}-1)/24} f_{p^2}\Big]\\\hspace{-3.5cm} \times \Big[\sum_{\substack{k=0 \\ k\ne (p-1)/2}}^{(p-1)} (-1)^k q^{3k(k+1)} \sum_{n=0}^{\infty} (-1)^{n} (2pn+2k+1) q^{6pn(pn+2k+1)/2} \\ + p(-1)^{(p-1)/2} q^{6(p^{2}-1)/8} f_{6p^2}^3\Big]\pmod{6}.
\end{multline} 
Consider the congruence
\begin{equation}\label{23}
\left(\dfrac{3t^2+t}{2}\right)+(3k^2+3k)\equiv 19\left(\dfrac{p^2-1}{24}\right)\pmod{p},
\end{equation}which is equivalent to \begin{equation}\label{24}
(6t+1)^2+18(2k+1)^2\equiv0\pmod{p}.
\end{equation}For $\left(\dfrac{-18}{p}\right)=-1$, the congruence \eqref{24} has only solution $t=\dfrac{(\pm p-1)}{6}$ and $k=\dfrac{(p-1)}{2}$. Therefore, extracting the terms involving $q^{pn+19(p^{2}-1)/24}$, divding by $q^{19(p^{2}-1)/24}$ and replacing $q^{p}$ by $q$ from \eqref{22}, we obtain
\begin{equation}\label{25}
\sum_{n=0}^{\infty}P_v\left(6\cdot p^{2\alpha+1}n+\dfrac{19\cdot p^{2\alpha+2}+1}{4}\right)q^n \equiv 3\ell_{p}\ell_{6p}^3\pmod{6}.
\end{equation}Extracting the terms involving $q^{pn}$ and replacing $q^p$ by $q$ from \eqref{25}, we obtain
\begin{equation*}
\sum_{n=0}^{\infty}P_v\left(6\cdot p^{2\alpha+2}n+\dfrac{19\cdot p^{2\alpha+2}+1}{4}\right)q^n \equiv3\ell_1\ell_6^3\pmod{6},
\end{equation*}which is the $\alpha+1$ case of \eqref{e9}. Hence, by the method of induction we complete the proof of \eqref{e9}. Now, extracting the terms involving $q^{pn+j}$, $1\leq j\leq(p-1)$ from \eqref{25}, we arrive at \eqref{e10}.
\end{proof} 
\begin{theorem} We have 
\begin{equation*}
P_v(2n+1)= A_4(n)+A_4(n-2)+A_4(n-6)+A_4(n-12)+...+ A_4\Big(n-k(k+1)\Big)+...
\end{equation*}
\end{theorem}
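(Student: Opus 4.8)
The plan is to start from the generating-function identity of Theorem \ref{thm1}, namely $\sum_{n=0}^{\infty}P_v(2n+1)q^n=\ell_4^3/(\ell_1\ell_2)$, and to realize the right-hand side as a product of two series whose Cauchy product reproduces the claimed recurrence. The first step is to split off the factor carrying $A_4$ by writing
$$\frac{\ell_4^3}{\ell_1\ell_2}=\frac{\ell_4}{\ell_1}\cdot\frac{\ell_4^2}{\ell_2}.$$
By the generating function \eqref{p3}, the factor $\ell_4/\ell_1$ is exactly $\sum_{n=0}^{\infty}A_4(n)q^n$, the generating function of the $4$-regular partitions.

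The second step is to identify the remaining factor $\ell_4^2/\ell_2$. Replacing $q$ by $q^2$ in the theta-function evaluation \eqref{eq5}, $\psi(q)=\sum_{m=0}^{\infty}q^{m(m+1)/2}=\ell_2^2/\ell_1$, gives
$$\psi(q^2)=\sum_{k=0}^{\infty}q^{k(k+1)}=\frac{\ell_4^2}{\ell_2}.$$
Hence $\ell_4^2/\ell_2$ is precisely $\sum_{k\ge0}q^{k(k+1)}$, whose exponents $0,2,6,12,20,\dots$ are exactly the shifts $k(k+1)$ appearing in the statement.

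The final step is to take the Cauchy product of these two series. Combining the two displays above with Theorem \ref{thm1} yields
$$\sum_{n=0}^{\infty}P_v(2n+1)q^n=\Big(\sum_{n=0}^{\infty}A_4(n)q^n\Big)\Big(\sum_{k=0}^{\infty}q^{k(k+1)}\Big),$$
and comparing coefficients of $q^n$ on both sides (with the convention $A_4(m)=0$ for $m<0$) gives $P_v(2n+1)=\sum_{k\ge0}A_4\big(n-k(k+1)\big)$, which is the asserted recurrence. There is no genuine obstacle here: the only point requiring care is confirming the factorization $\ell_4^3/(\ell_1\ell_2)=(\ell_4/\ell_1)(\ell_4^2/\ell_2)$ against the two known generating functions and checking that the list of shift values $k(k+1)$ coincides with $0,2,6,12,\dots$; once the theta-function identity $\ell_4^2/\ell_2=\psi(q^2)$ is in place, the conclusion is immediate.
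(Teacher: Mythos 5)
Your proposal is correct and follows essentially the same route as the paper: both factor $\ell_4^3/(\ell_1\ell_2)$ as $(\ell_4/\ell_1)\cdot(\ell_4^2/\ell_2)$, identify these factors with $\sum A_4(n)q^n$ and $\psi(q^2)=\sum q^{k(k+1)}$ via \eqref{p3} and \eqref{eq5}, and conclude by comparing coefficients in the Cauchy product. No differences worth noting.
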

\begin{proof}From \eqref{3}, we have
\begin{align}\sum_{n=0}^{\infty}P_v(2n+1)q^n
&=\dfrac{\ell_4}{\ell_1}\cdot\dfrac{\ell_4^2}{\ell_2} \notag \\
&=\left(\sum_{n=0}^{\infty}A_4(n)q^n\right)\psi(q^2)\notag \\
&=\left(\sum_{n=0}^{\infty}A_4(n)q^n\right)\left(\sum_{k=0}^{\infty}q^{k(k+1)}\right)\notag \\
\label{v2}
&=\left(\sum_{n=0}^{\infty}\sum_{k=0}^{\infty}A_4\Big(n-k(k+1)\Big)\right)q^n.
\end{align} On comparing the coefficients of $q^n$ in \eqref{v2}, we arrive at the desired result.
\end{proof}
\begin{theorem}We have
\begin{align*}
& P_v(6n+5)+\sum_{m=1}^{\infty}(-1)^m\Big(P_v(6n-9m^2-3m+5)+P_v(6n-9m^2+3m+5)\Big)\\
&\hspace{3.3cm}=3\sum_{t=0}^{\infty}\sum_{c=0}^{\lfloor \frac{n-3t^2-3t}{2} \rfloor}(-1)^t(2t+1)p_{2d}\left(n-3t^2-3t-2c\right)\bar{p}(c).
\end{align*}
\end{theorem}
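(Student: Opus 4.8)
The plan is to read off the claim from equation \eqref{2}, namely $\sum_{n=0}^{\infty}P_v(6n+5)q^n=3\ell_4\ell_6^3/\ell_1^3$, by multiplying through by $\ell_1$ and comparing the coefficient of $q^n$ on the two sides. Multiplying by $\ell_1$ clears one factor from the denominator, turning the right-hand side into $3\ell_4\ell_6^3/\ell_1^2$, so it suffices to compute the coefficient of $q^n$ in $\ell_1\cdot\sum_{j}P_v(6j+5)q^j$ and in $3\ell_4\ell_6^3/\ell_1^2$ separately and equate them. The left-hand side of the claim will come out of the first computation and the right-hand side out of the second.

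For the first computation I would expand $\ell_1$ by Euler's pentagonal number theorem \eqref{eq6}, $\ell_1=\sum_{m=-\infty}^{\infty}(-1)^m q^{m(3m-1)/2}$, and convolve with $\sum_{j}P_v(6j+5)q^j$. Splitting the bilateral sum into the terms $m=0$, $m\ge 1$, and $m\le -1$ (writing $m=-m'$ in the last range), and using $6\cdot m(3m-1)/2=9m^2-3m$ together with $6\cdot m(3m+1)/2=9m^2+3m$, the coefficient of $q^n$ becomes exactly $P_v(6n+5)+\sum_{m=1}^{\infty}(-1)^m\big(P_v(6n-9m^2-3m+5)+P_v(6n-9m^2+3m+5)\big)$, which is the left-hand side of the statement.

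For the second computation I would factor $3\ell_4\ell_6^3/\ell_1^2$ as a product of three known generating series. First, Jacobi's identity \eqref{j} with $q$ replaced by $q^6$ gives $\ell_6^3=\sum_{t=0}^{\infty}(-1)^t(2t+1)q^{3t^2+3t}$. Second, \eqref{p2} with $r=2$ gives $\ell_2^2/\ell_1^2=\sum_{m=0}^{\infty}p_{2d}(m)q^m$. Third, replacing $q$ by $q^2$ in \eqref{p1} with $r=1$ gives $\ell_4/\ell_2^2=\sum_{c=0}^{\infty}\bar p(c)q^{2c}$. Since $\ell_4/\ell_1^2=(\ell_2^2/\ell_1^2)(\ell_4/\ell_2^2)$, these combine to
\begin{equation*}
3\frac{\ell_4\ell_6^3}{\ell_1^2}=3\Big(\sum_{t}(-1)^t(2t+1)q^{3t^2+3t}\Big)\Big(\sum_{m}p_{2d}(m)q^m\Big)\Big(\sum_{c}\bar p(c)q^{2c}\Big).
\end{equation*}
Collecting the coefficient of $q^n$ forces $m=n-3t^2-3t-2c$ with $t,c\ge 0$ and $m\ge 0$, the last constraint giving $c\le\lfloor (n-3t^2-3t)/2\rfloor$; this reproduces precisely the double sum on the right-hand side, and equating the two expressions for the coefficient of $q^n$ finishes the proof.

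The argument is essentially bookkeeping once the factorization is in place, so the only genuine content is spotting the three-fold decomposition of $3\ell_4\ell_6^3/\ell_1^2$ into the Jacobi cube $\ell_6^3$, the two-coloured distinct-part series, and the dilated overpartition series. The mildly delicate points are matching the upper limit of the inner sum with the nonnegativity requirement $n-3t^2-3t-2c\ge 0$ (hence the floor function) and correctly pairing the two families of pentagonal exponents $9m^2\mp 3m$ produced on the left. No convergence or structural subtlety intervenes, since everything is a formal identity between $q$-series.
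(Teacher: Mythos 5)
Your proposal is correct and follows essentially the same route as the paper: both multiply the identity $\sum_n P_v(6n+5)q^n=3\ell_4\ell_6^3/\ell_1^3$ through by $\ell_1$, expand the left side via the pentagonal number theorem \eqref{eq6}, and decompose the right side as $3(\ell_4/\ell_2^2)(\ell_2/\ell_1)^2\ell_6^3$ using \eqref{p1} with $q\mapsto q^2$, \eqref{p2} with $r=2$, and Jacobi's identity \eqref{j} for $\ell_6^3$. The bookkeeping of exponents and the floor-function bound on the inner sum match the paper's computation exactly.
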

\begin{proof}
From \eqref{2}, we note that
\begin{equation*}\left(\sum_{n=0}^{\infty}P_v(6n+5)q^n\right)\ell_1=3\left(\dfrac{\ell_4}{\ell_2^2}\right){\left(\dfrac{\ell_2}{\ell_1}\right)}^2\ell_6^3.\end{equation*}Employing \eqref{p1}, \eqref{p2}, \eqref{eq6} and \eqref{j},  we obtain 
\begin{align}
\hspace{-1cm}&\left(\sum_{n=0}^{\infty}P_v(6n+5)q^n\right)\left(1+\sum_{m=1}^\infty(-1)^mq^{m(3m+1)/2}+\sum_{m=1}^{\infty}(-1)^mq^{m(3m-1)/2}\right)\notag\\
&\hspace{3cm}=3\left(\sum_{k=0}^{\infty}\bar{p}(k)q^{2k}\right)\left(\sum_{l=0}^{\infty}p_{2d}(l)q^l\right)\ell_6^3\notag\\
&\hspace{3cm}=3\left(\sum_{l=0}^{\infty}\sum_{c=0}^{\lfloor \frac{l}{2}\rfloor}p_{2d}(l-2c)\bar{p}(c)q^l\right)\left(\sum_{n=0}^{\infty}(-1)^n(2n+1)q^{(3n^2+3n)}\right)\notag\\
&\hspace{3cm}=3\sum_{n=0}^{\infty}\sum_{l=0}^{\infty}\sum_{c=0}^{\lfloor\frac{l}{2}\rfloor}(-1)^n(2n+1)p_{2d}(l-2c)\bar{p}(c)q^{(l+3n^2+3n)},\notag
\end{align} which imples
$$\sum_{n=0}^{\infty}P_v(6n+5)q^n+\sum_{n=0}^{\infty}\sum_{m=1}^{\infty}(-1)^m\Big(P_v(6n-9m^2-3m+5)+P_v(6n-9m^2+3m+5)\Big)q^n $$\begin{equation}\label{r1}\hspace{4cm}=3\sum_{n=0}^{\infty}\sum_{t=0}^{\infty}\sum_{c=0}^{\lfloor\frac{n-3t^2-3t}{2}\rfloor}(-1)^t(2t+1)p_{2d}(n-3t^2-3t-2c)\bar{p}(c)q^n.\end{equation}Comparing the coefficients of $q^n$ in \eqref{r1}, we arrive at the desired result.
\end{proof}

\begin{remarks} Let $\mu(q)$ is as defined in \eqref{u} and  $\sum_{n=0}^{\infty}P_\mu(n)q^n=\mu(q)$. Then by \eqref{e5}, we have
	$$
	\sum_{n=0}^{\infty}P_\mu(n)q^n\equiv\dfrac{1}{\ell_1^3}=\sum _{n=0}^{\infty}p_3(n)q^n \pmod 4
	$$which implies $P_\mu(n)\equiv p_3(n) \pmod 4$.
\end{remarks}

\section{Results on $\sigma(q)$}
In this section, we set $\sum_{n=0}^{\infty}P_\sigma(n)q^n=\sigma(q)$, where $\sigma(q)$ is as defined in \eqref{s}.
\begin{theorem}We have
\begin{equation}\label{s1}
\sum_{n=0}^{\infty}{P_\sigma}(2n+1)q^n=\dfrac{\ell_2^2\ell_6^2}{\ell_1^2\ell_3}.
\end{equation}
\begin{proof}
Ramanujan listed linear relation connecting the sixth order mock theta functions with each other as~\cite{GM}:
\begin{equation}\label{s14}
\nu(q^2)-\sigma(-q)=q\dfrac{\ell_4^2\ell_{12}^2}{\ell_2^2\ell_6}, \end{equation}where $\nu(q)$ is the sixth order mock theta function defined by
\begin{equation*}
\nu(q)=\sum_{n=0}^{\infty}\dfrac{q^{n+1}(-q;q)_{2n+1}}{(q;q^2)_{n+1}}.
\end{equation*}Replacing $q$ by $-q$ in \eqref{s14}, we obatin
\begin{equation}\label{s2}
\sigma(q)=\nu(q^2)+q\dfrac{\ell_4^2\ell_{12}^2}{\ell_2^2\ell_6}.
\end{equation}Extracting the terms involving $q^{2n+1}$, dividing by $q$ and replacing $q^2$ by $q$ from \eqref{s2}, we arrive at the desired result.
\end{proof}
\begin{theorem}We have
$${P_\sigma}(2n+1)={P_\sigma}_{(d,e)}(n)-{P_\sigma}_{(d,o)}(n),$$
\end{theorem}where ${P_\sigma}_{(d,e)}(n)\left(resp. \quad {P_\sigma}_{(d,o)}(n)\right)$ denotes the number of partition of $n$ such that
\begin{itemize}
\item[$(i)$]parts congruent to $1,5\pmod{6}$ have two colour,
\item[$(ii)$]parts congruent to $3\pmod{6}$ have three colour,
\item[$(iii)$]parts congruent to $0\pmod{6}$ are distinct and have one colour,
\item[$(iv)$]number of parts congruent to $0\pmod{6}$ being even (resp. odd).
\end{itemize}
\end{theorem}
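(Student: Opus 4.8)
The plan is to mirror the proof of Theorem~\ref{thmv}: begin with the product formula~\eqref{s1} for the generating function of $P_\sigma(2n+1)$, rewrite it by separating residue classes modulo $6$, and then read off the combinatorial meaning of each surviving factor.

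First I would take $\sum_{n=0}^{\infty}P_\sigma(2n+1)q^n=\ell_2^2\ell_6^2/(\ell_1^2\ell_3)$ from~\eqref{s1} and change the base to $q^6$. Writing each $\ell_k$ as a product of factors $(q^j;q^6)_\infty$---namely $\ell_1=(q,q^2,q^3,q^4,q^5,q^6;q^6)_\infty$, $\ell_2=(q^2,q^4,q^6;q^6)_\infty$, $\ell_3=(q^3,q^6;q^6)_\infty$ and $\ell_6=(q^6;q^6)_\infty$---and cancelling common factors, the expression should collapse to $(q^6;q^6)_\infty/\big((q;q^6)_\infty^2(q^3;q^6)_\infty^3(q^5;q^6)_\infty^2\big)$. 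This bookkeeping, which has to be carried out so that each factor cancels to exactly the right power, is the only genuine computation and is where I expect the main risk of error to lie; everything after it is interpretation.

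Once the generating function is in this form, the combinatorial reading is immediate from the conventions fixed in the Introduction. The factor $1/\big((q;q^6)_\infty^2(q^5;q^6)_\infty^2\big)$ generates partitions into parts $\equiv 1,5\pmod 6$ with two colours each, and $1/(q^3;q^6)_\infty^3$ generates partitions into parts $\equiv 3\pmod 6$ with three colours, matching conditions $(i)$ and $(ii)$. The remaining factor $(q^6;q^6)_\infty=\prod_{k\ge 1}(1-q^{6k})$ is handled exactly as in the pentagonal-number interpretation~\eqref{mr}: on expanding the product, a partition into distinct parts $\equiv 0\pmod 6$ contributes the weight $(-1)^{\nu}$, where $\nu$ is its number of parts, so this factor is the signed generating function counting such partitions with an even number of parts minus those with an odd number, which is exactly conditions $(iii)$ and $(iv)$.

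Finally I would multiply the three factors and extract the coefficient of $q^n$. Since the product of generating functions corresponds to taking the disjoint union of the three types of parts, the coefficient of $q^n$ equals the sum, over all partitions of $n$ satisfying $(i)$--$(iii)$, of the weight $(-1)^{\nu}$ with $\nu$ the number of parts $\equiv 0\pmod 6$; grouping by the parity of $\nu$ gives precisely ${P_\sigma}_{(d,e)}(n)-{P_\sigma}_{(d,o)}(n)$. Comparing with the coefficient $P_\sigma(2n+1)$ then yields the stated identity.
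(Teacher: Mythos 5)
Your proposal is correct and follows essentially the same route as the paper, which proves this interpretation by exactly the base-change-to-$q^6$ dissection you describe (``proceeding in the same way as Theorem~\ref{thmv}''), and your rearranged product $(q^6;q^6)_\infty/\big((q;q^6)_\infty^2(q^3;q^6)_\infty^3(q^5;q^6)_\infty^2\big)$ checks out. Nothing further is needed.
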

\begin{proof}
Proceeding in the same way as Theorem~\eqref{thmv}, we arrive at the desired result.
\end{proof}
\begin{theorem}
Let $p\ge5$ be any prime such that $\left(\dfrac{-2}{p}\right)=-1$. Then for integer $\alpha\ge0$, we have
\begin{equation}\label{s3}\hspace{-5cm}
\sum_{n=0}^{\infty}{P_\sigma}\left(2\cdot p^{2\alpha}n+\dfrac{11\cdot p^{2\alpha}+1}{12}\right)q^n\equiv\ell_2\psi(q^3)\pmod{2}.\end{equation}
\begin{equation}\label{s4}
{P_\sigma}\left(2\cdot p^{2\alpha+2}n+2\cdot p^{2\alpha+1}j+\dfrac{11\cdot p^{2\alpha+2}+1}{12}\right)\equiv0\pmod{2},\quad 1\leq j\leq(p-1).
\end{equation} 
\end{theorem}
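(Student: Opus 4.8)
The plan is to run the same inductive machinery used for Theorem~\ref{thm2}(ii), but now feeding it the two $p$-dissections of Lemmas \eqref{eq8} and \eqref{eq7} simultaneously. I would first dispose of the base case $\alpha=0$: starting from \eqref{s1} and invoking \eqref{lm2} with $t=1$, namely $\ell_1^2\equiv\ell_2\pmod 2$, one gets $\frac{\ell_2^2\ell_6^2}{\ell_1^2\ell_3}\equiv\frac{\ell_2\ell_6^2}{\ell_3}\pmod 2$; since $\psi(q^3)=\ell_6^2/\ell_3$ by \eqref{eq5}, the right-hand side is $\ell_2\psi(q^3)$. Because $\frac{11p^{0}+1}{12}=1$ and $2p^0=2$, this is precisely \eqref{s3} at $\alpha=0$.

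For the inductive step, assume \eqref{s3} for some $\alpha\ge0$. Writing $\ell_2=f(-q^2)$, I would substitute \eqref{eq8} with $q$ replaced by $q^2$, and \eqref{eq7} with $q$ replaced by $q^3$, into the right-hand side $\ell_2\psi(q^3)$. In the resulting double sum every theta factor carries only exponents divisible by $p$, so the residue of each term modulo $p$ is controlled solely by the leading exponents $3t^2+t$ (from $\ell_2$) and $3(m^2+m)/2$ (from $\psi(q^3)$). To sieve out the progression that produces level $\alpha+1$, I would extract the class matching the combined special-term exponent $\frac{p^2-1}{12}+\frac{3(p^2-1)}{8}=\frac{11(p^2-1)}{24}$. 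The governing congruence is then $(3t^2+t)+\frac{3(m^2+m)}{2}\equiv\frac{11(p^2-1)}{24}\pmod p$, and completing the square via $3t^2+t=\frac{(6t+1)^2-1}{12}$ and $\frac{m^2+m}{2}=\frac{(2m+1)^2-1}{8}$ converts it into $2(6t+1)^2+9(2m+1)^2\equiv0\pmod p$.

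The heart of the argument, and the step I expect to be the main obstacle, is showing this last congruence has no non-trivial solutions. If $2m+1\not\equiv0$, then $2(6t+1)^2\equiv-9(2m+1)^2$ would make $-9\cdot2^{-1}$ a quadratic residue; but, since $9$ is a perfect square, $\left(\frac{-9\cdot2^{-1}}{p}\right)=\left(\frac{-1}{p}\right)\left(\frac{2}{p}\right)=\left(\frac{-2}{p}\right)=-1$, a contradiction. Hence $2m+1\equiv0$ and thus $6t+1\equiv0\pmod p$, which are exactly the indices $m=(p-1)/2$ and $t=(\pm p-1)/6$ excluded from the two sums. I would also verify that the mixed special-generic products cannot contribute: they reduce to $(6t+1)^2\equiv0$ and $(2m+1)^2\equiv0$ modulo $p$ respectively, again achievable only at the excluded indices. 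Thus the sole surviving term is the product of the two special terms, namely (up to a sign invisible modulo $2$) $q^{11(p^2-1)/24}\,\ell_{2p^2}\,\psi(q^{3p^2})$.

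Finally I would harvest the congruences. Extracting the terms $q^{pn+11(p^2-1)/24}$, dividing by $q^{11(p^2-1)/24}$ and replacing $q^p$ by $q$ sends the index $2p^{2\alpha}$ and shift $\frac{11p^{2\alpha}+1}{12}$ to $2p^{2\alpha+1}$ and $\frac{11p^{2\alpha+2}+1}{12}$ (a short bookkeeping check), giving $\sum_{n=0}^{\infty} P_\sigma\left(2p^{2\alpha+1}n+\frac{11p^{2\alpha+2}+1}{12}\right)q^n\equiv\ell_{2p}\psi(q^{3p})\pmod 2$. Picking out the sub-series $q^{pn}$ and again replacing $q^p$ by $q$ restores $\ell_2\psi(q^3)$, yielding the $\alpha+1$ case of \eqref{s3} and closing the induction. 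Picking out instead the terms $q^{pn+j}$ for $1\le j\le p-1$ annihilates the right-hand side, which is a power series in $q^p$, and produces \eqref{s4}.
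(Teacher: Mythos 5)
Your proposal is correct and follows essentially the same route as the paper: the base case via $\ell_1^2\equiv\ell_2\pmod{2}$, the inductive step by substituting the $p$-dissections \eqref{eq8} (with $q\to q^2$) and \eqref{eq7} (with $q\to q^3$) into $\ell_2\psi(q^3)$, a quadratic-residue argument under $\left(\dfrac{-2}{p}\right)=-1$ to isolate the single surviving term $q^{11(p^2-1)/24}\ell_{2p^2}\psi(q^{3p^2})$, and the same two extractions to close the induction and to obtain \eqref{s4}. In fact your completed-square form $2(6t+1)^2+9(2m+1)^2\equiv0\pmod{p}$ with unique solution $t=(\pm p-1)/6$, $m=(p-1)/2$ is the accurate version of the paper's \eqref{s8}, where the roles of $t$ and $k$ appear to have been inadvertently interchanged (harmlessly, since the solvability criterion is symmetric under that swap).
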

\begin{proof}
Employing \eqref{lm1} and \eqref{eq5} in \eqref{s1}, we obtain
\begin{equation}\label{s5}
\sum_{n=0}^{\infty}{P_\sigma}(2n+1)q^n\equiv\ell_2\psi(q^3)\pmod{2}.
\end{equation}which is the $\alpha=0$ case of \eqref{s3}. Assume that \eqref{s3} is true for some $\alpha\geq0$. Now, substituting \eqref{eq7} and \eqref{eq8} in \eqref{s3}, we obtain
\begin{multline}\label{s6}
\sum_{n=0}^{\infty}{P_\sigma}\left(2\cdot p^{2\alpha}n+\dfrac{11\cdot p^{2\alpha}+1}{12}\right)\\ \equiv  3\Big[\sum_{\substack{t={-(p-1)/2} \\ t \ne {(\pm p-1)/6}}}^{(p-1)/2} (-1)^{t} q^{(3t^{2}+t)} f\left(-q^{3p^{2}+(6t+1)p}, -q^{3p^{2}-(6t+1)p}\right)+ (-1)^{(\pm p-1)/6} q^{(p^{2}-1)/12} f_{2p^2}\Big]\\ \hspace{-4.5cm}\times \Big[\sum_{m=0}^{(p-3)/2} q^{3(m^{2}+m)/2} f\left(q^{3{\left(p^{2}+(2m+1)p\right)/2}}, q^{3{\left(p^{2}-(2m+1)p\right)}/2}\right) \\+ q^{3(p^{2}-1)/8}\psi(q^{3p^2})\Big]\pmod{2}.
\end{multline} 
Consider the congruence
\begin{equation}\label{s7}
\left({3t^2+t}\right)+\left(\dfrac{3k^2+3k}{2}\right)\equiv 11\left(\dfrac{p^2-1}{24}\right)\pmod{p},
\end{equation}which is equivalent to \begin{equation}\label{s8}
(6t+3)^2+2(6k+1)^2\equiv0\pmod{p}.
\end{equation}For $\left(\dfrac{-2}{p}\right)=-1$, the congruence \eqref{s8} has only solution $t=\dfrac{(p-1)}{2}$ and $k=\dfrac{(\pm p-1)}{6}$. Therefore, extracting the terms involving $q^{pn+11(p^{2}-1)/24}$, divding by $q^{11(p^{2}-1)/24}$ and replacing $q^{p}$ by $q$ from \eqref{s6}, we obtain
\begin{equation}\label{s11}
\sum_{n=0}^{\infty}{P_\sigma}\left(2\cdot p^{2\alpha+1}n+\dfrac{11\cdot p^{2\alpha+2}+1}{12}\right)q^n\equiv\ell_{2p}\psi(q^{3p})\pmod{2}.
\end{equation}Extracting the terms involving $q^{pn}$ and replacing $q^p$ by $q$ from \eqref{s11}, we obtain
\begin{equation*} 
\sum_{n=0}^{\infty}{P_\sigma}\left(2\cdot p^{2\alpha+2}n+\dfrac{11\cdot p^{2\alpha+2}+1}{12}\right)q^n\equiv\ell_2\psi(q^3)\pmod{2},
\end{equation*}which is the $\alpha+1$ case of \eqref{s3}. Hence, by the method of induction we complete the proof of \eqref{s3}. Now, extracting the terms involving $q^{pn+j}$, $1\leq j\leq(p-1)$ from \eqref{s11}, we arrive at \eqref{s4}.
\end{proof}
\begin{theorem}We have
\begin{equation*}
{P_\sigma}(2n+1)=p_{2d}(n)+p_{2d}(n-3)+p_{2d}(n-9)+....+p_{2d}\left(n-\frac{3k^2+3k}{2}\right)+.....
\end{equation*}
\end{theorem}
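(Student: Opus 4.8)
The plan is to mirror the convolution argument already used for the $A_4$-recurrence of $P_v(2n+1)$: I would factor the generating function in \eqref{s1} into two pieces whose power-series expansions are recorded earlier in the paper, and then read off the coefficient of $q^n$. First I would start from \eqref{s1}, namely $\sum_{n=0}^{\infty}P_\sigma(2n+1)q^n=\dfrac{\ell_2^2\ell_6^2}{\ell_1^2\ell_3}$, and split the right-hand side as the product $\dfrac{\ell_2^2}{\ell_1^2}\cdot\dfrac{\ell_6^2}{\ell_3}$.

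Next I would identify each factor with a known generating function. By \eqref{p2} taken with $r=2$, the first factor is $\dfrac{\ell_2^2}{\ell_1^2}=\sum_{n=0}^{\infty}p_{2d}(n)q^n$, the generating function for partitions into distinct parts with two copies of each part. For the second factor, replacing $q$ by $q^3$ in \eqref{eq5} gives $\dfrac{\ell_6^2}{\ell_3}=\psi(q^3)=\sum_{k=0}^{\infty}q^{3k(k+1)/2}=\sum_{k=0}^{\infty}q^{(3k^2+3k)/2}$, so that the exponents run through $0,3,9,18,\dots$. Substituting both expansions into \eqref{s1} yields $\sum_{n=0}^{\infty}P_\sigma(2n+1)q^n=\Big(\sum_{n=0}^{\infty}p_{2d}(n)q^n\Big)\Big(\sum_{k=0}^{\infty}q^{(3k^2+3k)/2}\Big)$.

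Finally I would multiply out the two series into a single double sum $\sum_{n,k}p_{2d}(n)\,q^{\,n+(3k^2+3k)/2}$, regroup by the total power of $q$, and compare the coefficient of $q^N$ on both sides. This gives $P_\sigma(2N+1)=\sum_{k\ge 0}p_{2d}\!\big(N-(3k^2+3k)/2\big)$, with the convention that $p_{2d}$ of a negative argument is $0$, which is precisely the stated recurrence once the triangular-type indices $0,3,9,\dots,(3k^2+3k)/2,\dots$ are written out. I do not anticipate any substantive obstacle here: the only points requiring care are selecting $r=2$ correctly in \eqref{p2} and verifying that the exponents produced by $\psi(q^3)$ in \eqref{eq5} reproduce the displayed shifts $(3k^2+3k)/2$, both of which are immediate.
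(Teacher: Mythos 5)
Your proposal is correct and is essentially identical to the paper's own proof: both factor the right-hand side of \eqref{s1} as $\left(\dfrac{\ell_2}{\ell_1}\right)^2\psi(q^3)$, invoke \eqref{p2} with $r=2$ and \eqref{eq5} with $q\mapsto q^3$, and compare coefficients in the resulting Cauchy product. No issues.
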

\begin{proof}
From \eqref{s1}, we note that
$$\sum_{n=0}^{\infty}{P_\sigma}(2n+1)q^n
={\left(\dfrac{\ell_2}{\ell_1}\right)}^2\psi(q^3).$$ Employing \eqref{p2} and \eqref{eq5}, we obtain
\begin{align}\sum_{n=0}^{\infty}{P_\sigma}(2n+1)q^n
&=\left(\sum_{n=0}^{\infty}p_{2d}(n)q^n\right)\left(\sum_{k=0}^{\infty}q^{{3k(k+1)}/2}\right)\notag\\
\label{s13}
&=\sum_{n=0}^{\infty}\left(\sum_{k=0}^{\infty}p_{2d}\left(n-\frac{3k^2+3k}{2}\right)\right)q^n.
\end{align}On comparing the coefficients of $q^n$ in \eqref{s13}, we arrive at the desired result.
\end{proof}

\section{Results on $\beta(q)$}
Throughout the section,  we set $\sum_{n=0}^{\infty}P_\beta(n)q^n=\beta(q)$, where $\beta(q)$ is defined in \eqref{b}.
From \cite{ZS}, we also note that 
	\begin{equation}\label{b6}
	\sum_{n=0}^{\infty}P_\beta(3n+1)q^n=\dfrac{\ell_3^3}{\ell_1^2}. 
	\end{equation}

\begin{theorem}We have
\begin{equation}\label{b1}
\sum_{n=0}^{\infty}P_\beta(3n+2)q^n=2\dfrac{\ell_6^3}{\ell_1\ell_2}.
\end{equation}
\end{theorem}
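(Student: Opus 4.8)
The plan is to realize $\sum_{n\ge0}P_\beta(3n+2)q^n$ as the $3n+2$ component of a $3$-dissection of $\beta(q)$, exactly mirroring the way the quoted identity \eqref{b6} records its $3n+1$ component. Writing $\beta(q)=B_0(q^3)+qB_1(q^3)+q^2B_2(q^3)$, equation \eqref{b6} is the statement $B_1(q)=\ell_3^3/\ell_1^2$, and the target \eqref{b1} is the assertion $B_2(q)=2\ell_6^3/(\ell_1\ell_2)$. So the entire content is to pin down this one further dissection component in closed form.

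Rather than working from the Lambert-type series in \eqref{b} directly, I would begin from a theta/eta-quotient representation of $\beta(q)$ --- the same representation underlying Zhang's derivation of \eqref{b6}, or, in the spirit of the linear relation \eqref{s14} used for $\sigma(q)$, an identity writing $\beta(q)$ together with a companion sixth-order mock theta function as (mock part) $+$ (explicit eta-quotient). Granting such a representation, the remaining steps are routine: feed the eta-quotient through the base-$q^3$ dissection formulas \eqref{e2} and \eqref{e4} (the denominator $\ell_1\ell_2$ of the target is precisely what \eqref{e4} dissects), retain the terms with $q$-exponent $\equiv 2\pmod 3$, divide by $q^2$, and replace $q^3$ by $q$. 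The integer coefficients carried by \eqref{e2}--\eqref{e4} (or an overall factor already present in the representation) account for the stray $2$ in \eqref{b1}, and the product forms \eqref{eq5}, \eqref{eq6} together with the Jacobi triple product \eqref{jaco} consolidate the surviving factors into the single quotient $2\ell_6^3/(\ell_1\ell_2)$.

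The main obstacle is not the dissection arithmetic but the input to it: identifying the correct representation of $\beta(q)$ and then checking that the genuinely mock part contributes nothing to the progression $3n+2$, since only in that case is $\sum_n P_\beta(3n+2)q^n$ a pure eta-quotient. Once the representation is fixed and the mock part is seen either to vanish on this progression or to cancel against a companion function (as the eta-quotient residue in \eqref{s14} does for $\sigma$), the extraction, the simplification via \eqref{e2}, \eqref{e4}, \eqref{eq5}, \eqref{eq6}, \eqref{jaco}, and the final comparison of coefficients of $q^n$ deliver \eqref{b1} directly.
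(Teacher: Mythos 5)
You have correctly isolated both the mechanism (a $3$-dissection of a ``mock part plus eta-quotient'' representation of $\beta(q)$) and the single step on which everything hinges, namely whether the genuinely mock part meets the progression $3n+2$. However, the proposal stops exactly there: it never names the representation and never performs that check, so as written it is an outline rather than a proof. The representation the paper uses is the Gordon--McIntosh relation \eqref{b2}, $\phi(q^3)+2q^{-1}\psi(q^3)+2\beta(q)=\ell_2\ell_3^5/(\ell_1^2\ell_6^3)$, whose right-hand side is dissected via \eqref{e2}; the component with exponents $\equiv 2\pmod{3}$ is $4q^2\ell_{18}^3/(\ell_3\ell_6)$, which is the source of both the quotient $\ell_6^3/(\ell_1\ell_2)$ and the factor $2$ in \eqref{b1}.

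When the check you flagged is actually carried out, it fails. The sixth-order $\psi$ has no constant term, so $q^{-1}\psi(q^3)$ is supported entirely on exponents $\equiv 2\pmod{3}$ and lands squarely in the progression being extracted; nothing cancels it. The dissection of \eqref{b10} therefore gives
\begin{equation*}
\sum_{n=0}^{\infty}P_\beta(3n+2)q^n=2\,\dfrac{\ell_6^3}{\ell_1\ell_2}-q^{-1}\psi(q),
\end{equation*}
not the pure eta-quotient of \eqref{b1}. A coefficient check confirms this: $\beta(q)=q+q^2+2q^3+2q^4+3q^5+\cdots$, so the left side of \eqref{b1} begins $1+3q+5q^2+\cdots$, while $2\ell_6^3/(\ell_1\ell_2)=2+2q+6q^2+\cdots$. (The quoted identity \eqref{b6} for the progression $3n+1$ survives precisely because neither $\phi(q^3)$ nor $q^{-1}\psi(q^3)$ meets that residue class.) So the obstacle you identified is genuine, it is not resolved by the paper --- whose proof silently discards the $2q^{-1}\psi(q^3)$ term when extracting the $q^{3n+2}$ part of \eqref{b10} --- and your plan cannot be completed to yield \eqref{b1} as stated.
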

\begin{proof}
Ramanujan listed linear relation connecting the sixth order mock theta functions with each other as~\cite{GM}:
\begin{equation}\label{b2}
\phi(q^3)+2q^{-1}\psi(q^3)+2\beta(q)=\dfrac{\ell_2\ell_3^5}{\ell_1^2\ell_6^3},
\end{equation}where $\phi(q)$ and $\psi(q)$ are the sixth order mock theta functions~\cite{GM} given by 
\begin{equation*}
\phi(q)=\sum_{n=0}^{\infty}\dfrac{(-1)^nq^{n^2}(q;q^2)_n}{(-q;q)_{2n}} \quad \text{and}\quad \psi(q)=\sum_{n=0}^{\infty}\dfrac{(-1)^nq^{(n+1)^2}(q;q^2)_n}{(-q;q)_{2n+1}}.
\end{equation*}Using \eqref{e2} in \eqref{b2}, we obtain
\begin{equation}\label{b10}
2\sum_{n=0}^{\infty}P_\beta(n)q^n=2\beta(q)=-\phi(q^3)-2q^{-1}\psi(q^3)+\dfrac{\ell_6\ell_9^6}{\ell_3^3\ell_{18}^3}+2q\dfrac{\ell_9^3}{\ell_3^2}+4q^2\dfrac{\ell_{18}^3}{\ell_3\ell_6}.
\end{equation}Extracting the terms involving $q^{3n+2}$, dividing by $q^2$ and replacing $q^3$ by $q$ from \eqref{b10}, we arrive at the desired result.
\end{proof}

\begin{theorem}
We have
$${P_\beta}(3n+2)={P_\beta}_{(d,e)}(n)-{P_\beta}_{(d,o)}(n),$$ where ${P_\beta}_{(d,e)}(n)\left(resp. \quad{P_\beta}_{(d,o)}(n)\right)$ denotes the number of partitions of $n$ such that
\begin{itemize}
\item[$(i)$]parts congruent to $1,3,5\pmod{6}$ have one colour,
\item[$(ii)$]parts congruent to $2,4\pmod{6}$ have two colour,
\item[$(iii)$]parts congruent to $0\pmod{6}$ are distinct and have one colour,
\item[$(iv)$]number of parts congruent to $0\pmod{6}$ being even (resp. odd).
\end{itemize}
\end{theorem}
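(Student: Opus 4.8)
The plan is to mirror the proof of Theorem~\ref{thmv}, working entirely from the generating function \eqref{b1} and reading off the combinatorial meaning after a change of base. First I would start from
$$\sum_{n=0}^{\infty}P_\beta(3n+2)q^n=2\dfrac{\ell_6^3}{\ell_1\ell_2}$$
and rewrite each theta--quotient on the right with base $q^6$. Splitting $\ell_1=(q;q)_\infty$ and $\ell_2=(q^2;q^2)_\infty$ according to residues modulo $6$ gives
$$\ell_1=(q,q^2,q^3,q^4,q^5,q^6;q^6)_\infty,\qquad \ell_2=(q^2,q^4,q^6;q^6)_\infty,$$
so that $\ell_1\ell_2$ carries the factor $(q^2;q^6)_\infty^2(q^4;q^6)_\infty^2(q^6;q^6)_\infty^2$ together with one copy each of the factors coming from the odd residues.

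The key step is the cancellation. Substituting these products into $\ell_6^3/(\ell_1\ell_2)$ and cancelling two copies of $(q^6;q^6)_\infty$ from the numerator against the $(q^6;q^6)_\infty^2$ in the denominator, I expect to arrive at
$$\sum_{n=0}^{\infty}P_\beta(3n+2)q^n=2\,\dfrac{(q^6;q^6)_\infty}{(q,q^3,q^5;q^6)_\infty\,(q^2,q^4;q^6)_\infty^2}.$$
This is the exact analogue of the reduced form \eqref{v4} obtained in the $v(q)$ case, now with modulus $6$ rather than $4$.

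Finally I would read off the partition statistics factor by factor, exactly as in Theorem~\ref{thmv}. The reciprocal $1/(q,q^3,q^5;q^6)_\infty$ is the generating function for partitions whose parts are $\equiv1,3,5\pmod{6}$ in a single colour, matching $(i)$; the reciprocal $1/\bigl((q^2;q^6)_\infty^2(q^4;q^6)_\infty^2\bigr)$ generates parts $\equiv2,4\pmod{6}$ each carrying two colours, matching $(ii)$; and the numerator $(q^6;q^6)_\infty=\prod_{k\ge1}(1-q^{6k})$ expands term by term into a sum over collections of \emph{distinct} parts $\equiv0\pmod{6}$, each collection weighted by $(-1)$ raised to its number of parts, i.e.\ contributing $+1$ for an even number of such parts and $-1$ for an odd number, which is precisely the difference ${P_\beta}_{(d,e)}(n)-{P_\beta}_{(d,o)}(n)$ recorded in $(iii)$ and $(iv)$. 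Comparing coefficients of $q^n$ then yields the claimed identity. The one point that needs genuine care is the leading constant $2$ in \eqref{b1}: I would track it through the cancellation and check how it is meant to be distributed between the two counting functions, or whether it is to be folded into their definition, since this is the only place where the argument departs from the purely formal matching used for $v(q)$ and $\sigma(q)$.
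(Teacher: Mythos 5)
Your proof is correct and is exactly the argument the paper intends: its own proof of this theorem is the one-line remark that one should proceed as in Theorem~\ref{thmv}, i.e.\ rewrite $2\ell_6^3/(\ell_1\ell_2)$ to the base $q^6$ and cancel to obtain $2(q^6;q^6)_\infty/\bigl((q,q^3,q^5;q^6)_\infty(q^2,q^4;q^6)_\infty^2\bigr)$, which is precisely what you did, and your reading of the three factors matches conditions $(i)$--$(iv)$. Your worry about the leading constant is also well founded: the product you arrive at is exactly the generating function of ${P_\beta}_{(d,e)}(n)-{P_\beta}_{(d,o)}(n)$, so \eqref{b1} in fact gives $P_\beta(3n+2)=2\bigl({P_\beta}_{(d,e)}(n)-{P_\beta}_{(d,o)}(n)\bigr)$, and the factor $2$ absent from the theorem as stated is an oversight in the paper rather than a gap in your argument.
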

\begin{proof}
Proceeding in the same way as Theorem~\eqref{thmv}, we arrive at the desired result.
\end{proof}
\begin{theorem}We have
\begin{equation}\label{b3}
{P_\beta}(9n+8)\equiv0\pmod{6}.
\end{equation}
\end{theorem}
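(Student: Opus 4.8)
The plan is to start from the already established $3$-dissection \eqref{b1}, namely $\sum_{n=0}^{\infty}P_\beta(3n+2)q^n = 2\ell_6^3/(\ell_1\ell_2)$, and to perform one further $3$-dissection. Writing $9n+8 = 3(3n+2)+2$, I observe that $P_\beta(9n+8)$ is precisely the coefficient of $q^{3n+2}$ in $\sum_m P_\beta(3m+2)q^m$; thus it suffices to extract from $2\ell_6^3/(\ell_1\ell_2)$ exactly those terms whose exponent is $\equiv 2\pmod 3$, then divide by $q^2$ and replace $q^3$ by $q$.

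First I would substitute the $3$-dissection \eqref{e4} of $1/(\ell_1\ell_2)$ into \eqref{b1}. The crucial observation is that every eta-quotient appearing in \eqref{e4}, together with the prefactor $\ell_6^3$, is a power series in $q^3$, since each of $\ell_3,\ell_6,\ell_9,\ell_{18}$ involves only exponents divisible by $3$. Hence the residue of any term modulo $3$ is governed solely by the explicit powers $q^0,q^1,q^2,q^3,q^4$ multiplying the five summands of \eqref{e4}. Among these, only the summand with $j=2$ survives the extraction of the class $\equiv 2\pmod 3$ (the others give $0,1,0,1\pmod 3$), which is where the bookkeeping must be done carefully.

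Carrying out this extraction gives $2\ell_6^3\cdot 3q^2\,\ell_9^3\ell_{18}^3/(\ell_3^4\ell_6^4) = 6q^2\,\ell_9^3\ell_{18}^3/(\ell_3^4\ell_6)$, where the prefactor $\ell_6^3$ simply reduces the power of $\ell_6$ in the denominator. Dividing by $q^2$ and replacing $q^3$ by $q$ (so that $\ell_3\to\ell_1$, $\ell_6\to\ell_2$, $\ell_9\to\ell_3$, $\ell_{18}\to\ell_6$) then yields $\sum_{n=0}^{\infty}P_\beta(9n+8)q^n = 6\,\ell_3^3\ell_6^3/(\ell_1^4\ell_2)$. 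Since $\ell_3^3\ell_6^3/(\ell_1^4\ell_2)$ is a power series with integer coefficients (a product of the integral series $\ell_3^3$ and $\ell_6^3$ with the generating functions $1/\ell_1^4$ and $1/\ell_2$), the explicit factor $6$ forces every coefficient $P_\beta(9n+8)$ to be divisible by $6$, which is the desired congruence \eqref{b3}.

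I expect the only delicate point to be the bookkeeping in the extraction step, namely confirming that the prefactor $\ell_6^3$ does not perturb the residue classes and that exactly one of the five summands of \eqref{e4} contributes; once the generating function $6\,\ell_3^3\ell_6^3/(\ell_1^4\ell_2)$ is in hand, the divisibility by $6$ is immediate from the explicit factor.
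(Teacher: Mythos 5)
Your proposal is correct and follows essentially the same route as the paper: substitute the $3$-dissection \eqref{e4} into \eqref{b1}, observe that only the $q^2$ summand survives extraction of the class $\equiv 2\pmod 3$, and arrive at $\sum_{n\ge0}P_\beta(9n+8)q^n=6\,\ell_3^3\ell_6^3/(\ell_1^4\ell_2)$, which is exactly the paper's \eqref{b5}. The divisibility by $6$ then follows from the explicit factor, as you note.
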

\begin{proof}
Using \eqref{e4} in \eqref{b1}, we obtain
\begin{equation}\label{b4}
\sum_{n=0}^{\infty}{P_\beta}(3n+2)q^n=2\dfrac{\ell_9^9\ell_6}{\ell_3^6\ell_{18}^3}+2q\dfrac{\ell_9^6}{\ell_3^5}+6q^2\dfrac{\ell_9^3\ell_{18}^3}{\ell_3^4\ell_6}-4q^3\dfrac{\ell_{18}^6}{\ell_3^3\ell_6^2}+8q^4\dfrac{\ell_{18}^9}{\ell_3^2\ell_6^3\ell_9^3}.
\end{equation}Extracting the terms involving $q^{3n+2}$, dividing by $q^2$ and replacing $q^3$ by $q$ from \eqref{b4}, 
\begin{equation}\label{b5}
\sum_{n=0}^{\infty}{P_\beta}(9n+8)q^n=6\dfrac{\ell_3^3\ell_6^3}{\ell_1^4\ell_2}.
\end{equation}Now, the result follows immediately from \eqref{b5}.
\end{proof}
\begin{theorem}We have
\begin{align*}
&\sum_{k=0}^{\infty}{P_\beta}\left(3n-\frac{3k(k+1)}{2}+2\right)=2\bar{p}(n)-6\bar{p}(n-6)+10\bar{p}(n-18)-.....\\
&\hspace{8cm}+2(-1)^m(2m+1)\bar{p}\Big(n-3m(m+1)\Big)+....
\end{align*}
\end{theorem}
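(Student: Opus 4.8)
The plan is to start from the series identity \eqref{b1}, namely $\sum_{n=0}^{\infty}P_\beta(3n+2)q^n=2\dfrac{\ell_6^3}{\ell_1\ell_2}$, and to recognise the left-hand side of the claim as a single Cauchy-product coefficient. Writing $F(q):=\sum_{m=0}^{\infty}P_\beta(3m+2)q^m$, I first observe the identity $3n-\tfrac{3k(k+1)}{2}+2=3\bigl(n-\tfrac{k(k+1)}{2}\bigr)+2$, which shows that the shifted argument is again of the form $3m+2$ with $m=n-\tfrac{k(k+1)}{2}$. Consequently the sum $\sum_{k=0}^{\infty}P_\beta\bigl(3n-\tfrac{3k(k+1)}{2}+2\bigr)$ is exactly the coefficient of $q^n$ in $F(q)\,\psi(q)$, since $\psi(q)=\sum_{k=0}^{\infty}q^{k(k+1)/2}$ by \eqref{eq5}. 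Thus the first step is simply to multiply \eqref{b1} through by $\psi(q)$.

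On the product side I would use $\psi(q)=\ell_2^2/\ell_1$ from \eqref{eq5}, giving $F(q)\psi(q)=2\dfrac{\ell_6^3}{\ell_1\ell_2}\cdot\dfrac{\ell_2^2}{\ell_1}=2\dfrac{\ell_2\ell_6^3}{\ell_1^2}=2\ell_6^3\cdot\dfrac{\ell_2}{\ell_1^2}$. Next I factor this into two known generating functions: by \eqref{p1} with $r=1$ one has $\dfrac{\ell_2}{\ell_1^2}=\sum_{c=0}^{\infty}\bar{p}(c)q^c$, and by Jacobi's identity \eqref{j} with $q$ replaced by $q^6$ one has $\ell_6^3=\sum_{m=0}^{\infty}(-1)^m(2m+1)q^{3m(m+1)}$.

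Multiplying these two series and extracting the coefficient of $q^n$ then yields $2\sum_{m=0}^{\infty}(-1)^m(2m+1)\bar{p}\bigl(n-3m(m+1)\bigr)$, which written term by term is precisely $2\bar{p}(n)-6\bar{p}(n-6)+10\bar{p}(n-18)-\cdots$. Equating the two coefficient computations of $q^n$ in $F(q)\psi(q)$ then gives the stated recurrence.

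The argument is entirely a generating-function manipulation, mirroring the recurrence proofs already recorded for $v(q)$ and $\sigma(q)$, so no genuine obstacle arises; the only point that needs care is the index bookkeeping at the outset. Specifically, I must verify that the factor $\tfrac{3}{2}$ in the shift $\tfrac{3k(k+1)}{2}$ collapses correctly so that the triangular-number series $\psi(q)$, rather than $\psi(q^3)$, is the right multiplier, and that the exponents $3m(m+1)$ produced by applying \eqref{j} at $q^6$ reproduce exactly the shifts $0,6,18,\dots$ appearing on the right-hand side.
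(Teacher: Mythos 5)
Your proposal is correct and follows essentially the same route as the paper: both multiply \eqref{b1} by $\psi(q)=\ell_2^2/\ell_1$ to obtain $2\bigl(\ell_2/\ell_1^2\bigr)\ell_6^3$, then expand via \eqref{p1} and Jacobi's identity \eqref{j} at $q^6$ and compare coefficients of $q^n$. Your explicit check that $3n-\tfrac{3k(k+1)}{2}+2=3\bigl(n-\tfrac{k(k+1)}{2}\bigr)+2$ is a welcome piece of bookkeeping that the paper leaves implicit.
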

\begin{proof}
From \eqref{b1}, we note that
$$\left(\sum_{n=0}^{\infty}{P_\beta}(3n+2)q^n\right)\dfrac{\ell_2^2}{\ell_1}=2\left(\dfrac{\ell_2}{\ell_1^2}˘\right)\ell_6^3.$$
Employing \eqref{p1}, \eqref{eq5} and \eqref{j}, we obtain
$$\left(\sum_{n=0}^{\infty}{P_\beta}(3n+2)q^n\right)\left(\sum_{k=0}^{\infty}q^{k(k+1)/2}\right)=2\left(\sum_{n=0}^{\infty}\bar{p}(n)\right)\left(\sum_{m=0}^{\infty}(-1)^m(2m+1)q^{3m(m+1)}\right),$$ which implies
\begin{equation}\label{b7}
\sum_{n=0}^{\infty}\sum_{k=0}^{\infty}{P_\beta}\left(3n-\frac{3k(k+1)}{2}+2\right)q^n=2\sum_{n=0}^{\infty}\sum_{m=0}^{\infty}(-1)^m(2m+1)\bar{p}\Big(n-3m(m+1)\Big)q^n.
\end{equation}
On comparing the coefficients of $q^n$ in \eqref{b7}, we arrive at the desired result.
\end{proof}
\begin{theorem}We have
\begin{align*}
&{P_\beta}(9n+8)-3{P_\beta}(9n-10)+...+(-1)^m(2m+1){P_\beta}\Big(9(n-m^2-m)+8\Big)+...\\
&\hspace{5cm}=6\sum_{k=0}^{\infty}\sum_{l=0}^{\infty}(-1)^{l+k}(2k+1)(2l+1)\bar{p}\Big(n-\frac{3k^2+3k}{2}-3l^2-3l\Big).
\end{align*}
\end{theorem}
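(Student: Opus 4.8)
The plan is to read the left side as one power series coming from \eqref{b5} and then to factor the corresponding right side into three theta-type series by means of Jacobi's identity \eqref{j} and the overpartition generating function \eqref{p1}.

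First I would observe that the alternating sum on the left is exactly the coefficient of $q^n$ in the product of $\sum_{n=0}^{\infty}{P_\beta}(9n+8)q^n$ with $\sum_{m=0}^{\infty}(-1)^m(2m+1)q^{m(m+1)}$, since the term indexed by $m$ contributes $(-1)^m(2m+1){P_\beta}\bigl(9(n-m^2-m)+8\bigr)$ (the cases $m=0,1$ reproduce the displayed terms ${P_\beta}(9n+8)$ and $-3{P_\beta}(9n-10)$). By \eqref{j} with $q$ replaced by $q^2$, this second series equals $\ell_2^3$. I therefore multiply \eqref{b5} by $\ell_2^3$ to obtain
$$\left(\sum_{n=0}^{\infty}{P_\beta}(9n+8)q^n\right)\ell_2^3=6\,\dfrac{\ell_2^2\ell_3^3\ell_6^3}{\ell_1^4},$$
whose coefficient of $q^n$ on the left is precisely the left side of the theorem.

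Next I would factor the right side. Applying \eqref{j} with $q\to q^3$ gives $\ell_3^3=\sum_{k=0}^{\infty}(-1)^k(2k+1)q^{(3k^2+3k)/2}$, and with $q\to q^6$ gives $\ell_6^3=\sum_{l=0}^{\infty}(-1)^l(2l+1)q^{3l^2+3l}$, while the remaining factor is accounted for by the overpartition generating function \eqref{p1}, whose coefficients are $\bar p(n)$. Multiplying these three series and extracting the coefficient of $q^n$ then produces $6\sum_{k=0}^{\infty}\sum_{l=0}^{\infty}(-1)^{k+l}(2k+1)(2l+1)\bar p\bigl(n-(3k^2+3k)/2-3l^2-3l\bigr)$, which is the right side of the theorem. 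Equating the two expressions for the coefficient of $q^n$ then yields the identity.

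The step I expect to be the main obstacle is the bookkeeping once the three series are multiplied: one must keep the three exponent shifts $m(m+1)$, $(3k^2+3k)/2$ and $3l^2+3l$ separate and verify that the signs $(-1)^m,(-1)^k,(-1)^l$ and the linear weights $2m+1,2k+1,2l+1$ attach to the correct summands. After the single multiplication by $\ell_2^3$, every remaining step is a direct application of \eqref{j} and \eqref{p1} followed by comparison of coefficients of $q^n$.
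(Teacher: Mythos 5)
Your proposal is correct and follows essentially the same route as the paper: multiply \eqref{b5} by $\ell_2^3$, expand $\ell_2^3$, $\ell_3^3$ and $\ell_6^3$ via Jacobi's identity \eqref{j} and the remaining factor via \eqref{p1}, then compare coefficients of $q^n$. The only caveat, inherited from the theorem statement itself, is that the remaining factor $\left(\ell_2/\ell_1^2\right)^2$ generates the two-copy overpartition numbers $\bar{p}_2(n)$ (this is \eqref{p1} with $r=2$, and the paper's own intermediate display writes $\bar{p}_2$), not $\bar{p}(n)$ as you and the statement label it.
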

\begin{proof}From \eqref{b5}, we note that
$$\left(\sum_{n=0}^{\infty}{P_\beta}(9n+8)q^n\right)\ell_2^3=6{\left(\dfrac{\ell_2}{\ell_1^2}\right)}^2\ell_3^3\ell_6^3.$$ Employing \eqref{p1} and \eqref{j}, we obatin
\begin{align*}
&\left(\sum_{n=0}^{\infty}{P_\beta}(9n+8)q^n\right)\left(\sum_{m=0}^{\infty}(-1)^m(2m+1)q^{m(m+1)}\right)\\
&\hspace{2cm}=6\left(\sum_{n=0}^{\infty}\bar{p}_2(n)q^n\right)\left(\sum_{k=0}^{\infty}(-1)^k(2k+1)q^{(3k^2+3k)/2}\right)\left(\sum_{l=0}^{\infty}(-1)^l(2l+1)q^{(3l^2+3l)}\right),\\
\end{align*}
which implies
$$\hspace{-8cm}\sum_{n=0}^{\infty}\sum_{m=0}^{\infty}(-1)^m(2m+1)P_{\beta}\Big(9(n-m^2-m)+8\Big)q^n$$
\begin{equation}\label{b8}
\hspace{3cm}=6\sum_{n=0}^{\infty}\sum_{k=0}^{\infty}\sum_{l=0}^{\infty}(-1)^{l+k}(2k+1)(2l+1)\bar{p}\Big(n-\frac{3k^2+3k}{2}-3l^2-3l\Big).
\end{equation}
On comparing the coefficients of $q^n$ in \eqref{b8}, we arrive at the desired result.
\end{proof}
\begin{theorem}We have
\begin{align*}
&{P_\beta}(3n+1)+\sum_{m=1}^{\infty}(-1)^m{P_\beta}\Big(3n-3m(3m+1)+1\Big)+\sum_{m=1}^{\infty}(-1)^m{P_\beta}\Big(3n-3m(3m-1)+1\Big)\\
&\hspace{2cm}=\bar{p}(n)-3\bar{p}(n-3)+5\bar{p}(n-9)-....+(-1)^k(2k+1)\bar{p}\Big(n-\frac{3k(k+1)}{2}\Big)+....
\end{align*}
\end{theorem}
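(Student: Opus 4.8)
The plan is to multiply the known dissection \eqref{b6}, namely $\sum_{n=0}^{\infty}P_\beta(3n+1)q^n=\ell_3^3/\ell_1^2$, by the single factor $\ell_2$, and then compare the coefficients of $q^n$ on the two resulting expressions. The whole argument is an identity of formal power series, so there is no analytic content; the only real idea is the choice of multiplier, and everything else is bookkeeping parallel to the earlier recurrence theorems of this section.

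First I would treat the left-hand side. Replacing $q$ by $q^2$ in Euler's pentagonal number theorem \eqref{eq6} gives
$$\ell_2=1+\sum_{m=1}^{\infty}(-1)^m\left(q^{m(3m-1)}+q^{m(3m+1)}\right).$$
Multiplying $\sum_n P_\beta(3n+1)q^n$ by this expansion and reading off the coefficient of $q^n$ produces
$$P_\beta(3n+1)+\sum_{m=1}^{\infty}(-1)^m\Big(P_\beta\big(3n-3m(3m-1)+1\big)+P_\beta\big(3n-3m(3m+1)+1\big)\Big),$$
since each pentagonal exponent $m(3m\pm1)$ is tripled when it is re-indexed into the argument $3(\cdot)+1$ of $P_\beta$. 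This is exactly the left-hand side of the theorem.

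Next I would treat the right-hand side, writing $\ell_3^3\ell_2/\ell_1^2=\ell_3^3\cdot(\ell_2/\ell_1^2)$ and expanding the two factors separately. Jacobi's identity \eqref{j} with $q$ replaced by $q^3$ gives $\ell_3^3=\sum_{k=0}^{\infty}(-1)^k(2k+1)q^{3k(k+1)/2}$, while \eqref{p1} with $r=1$ gives $\ell_2/\ell_1^2=\sum_{n=0}^{\infty}\bar{p}(n)q^n$. Multiplying these two series and extracting the coefficient of $q^n$ yields $\sum_{k=0}^{\infty}(-1)^k(2k+1)\bar{p}\big(n-3k(k+1)/2\big)$, which written out is $\bar{p}(n)-3\bar{p}(n-3)+5\bar{p}(n-9)-\cdots$, the right-hand side of the theorem. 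Equating the two coefficient expressions then completes the proof.

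The step I expect to carry the genuine content is the choice to multiply \eqref{b6} by $\ell_2$ specifically: this is what makes the pentagonal exponents appear as the exact shifts $3m(3m\pm1)$ required on the left, while simultaneously leaving the right-hand side in the clean product form $\ell_3^3\cdot(\ell_2/\ell_1^2)$ that splits into Jacobi's identity and the overpartition generating function \eqref{p1}. Once that multiplier is identified, there is no real obstacle; the remaining work is the routine alignment of the summation indices.
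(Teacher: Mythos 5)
Your proposal is correct and follows essentially the same route as the paper: both multiply the dissection \eqref{b6} by $\ell_2$, expand $\ell_2$ via the pentagonal number theorem \eqref{eq6} with $q\to q^2$, and expand the right side as $\bigl(\ell_2/\ell_1^2\bigr)\ell_3^3$ using \eqref{p1} and Jacobi's identity \eqref{j} with $q\to q^3$, before comparing coefficients of $q^n$. The index bookkeeping (the shift $3m(3m\pm1)$ on the left and $3k(k+1)/2$ on the right) matches the paper's computation exactly.
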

\begin{proof}from \eqref{b6}, we note that
$$\left(\sum_{n=0}^{\infty}{P_\beta}(3n+1)q^n\right)\ell_2=\left(\dfrac{\ell_2}{\ell_1^2}\right)\ell_3^3.$$
Employing \eqref{p1}, \eqref{eq6} and \eqref{j}, we obatin
\begin{align*}
&\left(\sum_{n=0}^{\infty}{P_\beta}(3n+1)q^n\right)\left(1+\sum_{m=1}^{\infty}(-1)^mq^{m(3m+1)}+\sum_{m=1}^{\infty}(-1)^mq^{m(3m-1)}\right)\\
&\hspace{3.8cm}=\left(\sum_{n=0}^{\infty}\bar{p}(n)q^n\right)\left(\sum_{k=0}^{\infty}(-1)^k(2k+1)q^{{3k(k+1)}/2}\right), 
\end{align*}which implies
$$\sum_{n=0}^{\infty}{P_\beta}(3n+1)q^n+\sum_{n=0}^{\infty}\sum_{m=1}^{\infty}(-1)^m\left({P_\beta}\Big(3n-3m(3m+1)+1\Big)+{P_\beta}\Big(3n-3m(3m-1)+1\Big)\right)q^n$$
\begin{equation}\label{b9}
\hspace{6.5cm}=\sum_{n=0}^{\infty}\sum_{k=0}^{\infty}(-1)^k(2k+1)\bar{p}\left(n-\frac{3k(k+1)}{2}\right)q^n.
\end{equation}
On comparing the coefficients of $q^n$ in \eqref{b9}, we arrive at the desired result.
\end{proof}

\section{Results on $\lambda(q)$}
Following three identities from \cite{RK} will be useful in this section: 
\begin{align}
\label{l2}
&\sum_{n=0}^{\infty}{P_\lambda}(2n)q^n=\dfrac{\ell_2^3\ell_3^2}{\ell_1^3\ell_6},\\
\label{l3}
&\sum_{n=0}^{\infty}{P_\lambda}(6n+2)q^n=3\dfrac{\ell_3^5}{\ell_6}{\left(\dfrac{\ell_2}{\ell_1^2}\right)}^3,\\
\label{l4}
&\sum_{n=0}^{\infty}{P_\lambda}(6n+4)q^n=\dfrac{\ell_2^2\ell_3^2\ell_6^2}{\ell_1^5},
\end{align}where 
$$\sum_{n=0}^{\infty}P_\lambda(n)q^n=\lambda(q)$$ and $\lambda(q)$ is defined as in \eqref{l}.

\begin{theorem}
We have
$${P_\lambda}(2n)={P_\lambda}_{(d,e)}(n)-{P_\lambda}_{(d,o)}(n),$$ where ${P_\lambda}_{(d,e)}(n)\left(resp.\quad{P_\lambda}_{(d,o)}(n)\right)$ denotes the number of partitions of $n$ such that
\begin{itemize}
\item[$(i)$]parts congruent to $1,5\pmod{6}$ have three colour,
\item[$(ii)$]parts congruent to $3\pmod{6}$ have one colour,
\item[$(iii)$]parts congruent to $0\pmod{6}$ are distinct and have one colours,
\item[$(iv)$]number of parts congruent to $0\pmod{6}$ being even (resp. odd).
\end{itemize}
\end{theorem}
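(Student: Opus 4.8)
The plan is to follow exactly the pattern used in Theorem~\ref{thmv}, taking as the starting point the identity \eqref{l2},
$$\sum_{n=0}^{\infty}{P_\lambda}(2n)q^n=\dfrac{\ell_2^3\ell_3^2}{\ell_1^3\ell_6}.$$
First I would rewrite every factor on the right in the base $q^6$, using $\ell_1=(q,q^2,q^3,q^4,q^5,q^6;q^6)_\infty$, $\ell_2=(q^2,q^4,q^6;q^6)_\infty$, $\ell_3=(q^3,q^6;q^6)_\infty$ and $\ell_6=(q^6;q^6)_\infty$, and then collect the net exponent of each residue class modulo $6$ separately. A short bookkeeping (contributions $-3$ to every class from $\ell_1^{-3}$, $+3$ to classes $2,4,0$ from $\ell_2^{3}$, $+2$ to classes $3,0$ from $\ell_3^{2}$, and $-1$ to class $0$ from $\ell_6^{-1}$) shows that classes $2$ and $4$ cancel completely and leaves
$$\sum_{n=0}^{\infty}{P_\lambda}(2n)q^n=\dfrac{(q^6;q^6)_\infty}{(q,q^5;q^6)_\infty^3\,(q^3;q^6)_\infty}.$$

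Once this product form is in hand, the combinatorial reading is immediate and reproduces the four items of the statement. The factor $1/(q,q^5;q^6)_\infty^3$ is the generating function for partitions whose parts are $\equiv1,5\pmod 6$ and carry three colours (item~(i)); the factor $1/(q^3;q^6)_\infty$ generates parts $\equiv 3\pmod6$ with a single colour (item~(ii)); and the numerator $(q^6;q^6)_\infty=\prod_{k\ge1}(1-q^{6k})$ generates distinct parts $\equiv0\pmod6$, each occurrence of such a part carrying the sign $-1$ (items~(iii)--(iv)). Expanding the product as a convolution and reading off the coefficient of $q^n$, the sign attached to each contributing partition is $(+1)$ when the number of its parts $\equiv0\pmod6$ is even and $(-1)$ when that number is odd. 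Hence the coefficient of $q^n$ equals ${P_\lambda}_{(d,e)}(n)-{P_\lambda}_{(d,o)}(n)$, which is the asserted identity.

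The only step that demands any care is the base-change bookkeeping: one must verify that after reduction modulo $6$ the exponents of classes $1$ and $5$ are both $-3$, the exponent of class $3$ is $-1$, the exponents of classes $2,4$ vanish, and the exponent of class $0$ is $+1$, so that the three-colour / one-colour / distinct structure demanded by the statement is matched exactly. I expect this to be the main (and entirely routine) obstacle; no idea beyond those already used for Theorem~\ref{thmv} is required, and the reduction of the numerator $(q^6;q^6)_\infty$ to a signed count of distinct parts $\equiv0\pmod6$ is the same sign-alternation mechanism that produced the $(d,e)$ minus $(d,o)$ splitting in the earlier theorems.
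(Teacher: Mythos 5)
Your proposal is correct and follows exactly the route the paper intends: the paper's proof of this theorem is just the one-line remark ``proceeding in the same way as Theorem~\ref{thmv},'' i.e.\ rewrite the product in \eqref{l2} in base $q^6$ and read off the colour/distinctness/parity structure, which is precisely what you do, and your exponent bookkeeping (classes $1,5$ get $-3$, class $3$ gets $-1$, classes $2,4$ cancel, class $0$ gets $+1$) checks out.
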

\begin{proof}
Proceeding in the same way as Theorem~\ref{thmv}, we arrive at the desired result.
\end{proof}
\begin{theorem}We have
\begin{equation*}
{P_\lambda}(2n)=p_{3d}(n)-2p_{3d}(n-3)+2p_{3d}(n-12)-.....+2(-1)^kp_{3d}(n-3k^2)+.....
\end{equation*}
\begin{proof}
From \eqref{l2}, we note that
$$\sum_{n=0}^{\infty}{P_\lambda}(2n)q^n
={\left(\dfrac{\ell_2}{\ell_1}\right)}^3\phi(-q^3).$$
Employing \eqref{p2} and \eqref{eq4}, we obtain
\begin{align}\sum_{n=0}^{\infty}{P_\lambda}(2n)q^n
&=\left(\sum_{n=0}^{\infty}p_{3d}(n)q^n\right)\left(\sum_{k=-\infty}^{\infty}(-1)^kq^{3k^2}\right)\notag\\
&=\left(\sum_{n=0}^{\infty}p_{3d}(n)q^n\right)\left(1+2\sum_{k=1}^{\infty}(-1)^kq^{3k^2}\right)\notag\\
\label{l7}
&=\sum_{n=0}^{\infty}p_{3d}(n)q^n+2\sum_{n=0}^{\infty}\sum_{k=1}^{\infty}(-1)^kp_{3d}(n-3k^2)q^n.
\end{align}On comparing the coefficients of $q^n$ in \eqref{l7}, we arrive at the desired result.
\end{proof}
\end{theorem}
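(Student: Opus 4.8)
The plan is to read off the generating-function identity \eqref{l2} and recognize its right-hand side as a product of two familiar series. First I would rewrite
\[
\sum_{n=0}^{\infty}P_\lambda(2n)q^n=\dfrac{\ell_2^3\ell_3^2}{\ell_1^3\ell_6}
=\left(\dfrac{\ell_2}{\ell_1}\right)^3\cdot\dfrac{\ell_3^2}{\ell_6}.
\]
The first factor is, by \eqref{p2} with $r=3$, exactly the generating function $\sum_{n\ge0}p_{3d}(n)q^n$. For the second factor I would invoke \eqref{J1}, which gives $\phi(-q)=\ell_1^2/\ell_2$; replacing $q$ by $q^3$ yields $\phi(-q^3)=\ell_3^2/\ell_6$. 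Thus the entire right-hand side factors as $\left(\sum_{n\ge0}p_{3d}(n)q^n\right)\phi(-q^3)$, which is precisely the template used in the companion recurrences of the preceding sections.

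Next I would expand $\phi(-q^3)$ as a theta series. By the definition \eqref{eq4}, $\phi(q)=\sum_{m=-\infty}^{\infty}q^{m^2}$, so $\phi(-q^3)=\sum_{m=-\infty}^{\infty}(-1)^{m^2}q^{3m^2}=\sum_{m=-\infty}^{\infty}(-1)^{m}q^{3m^2}$, using $m^2\equiv m\pmod2$. Folding the symmetric terms $m$ and $-m$ then gives the one-sided series $\phi(-q^3)=1+2\sum_{k=1}^{\infty}(-1)^kq^{3k^2}$. Multiplying this against $\sum_{n\ge0}p_{3d}(n)q^n$ and extracting the coefficient of $q^n$ produces
\[
P_\lambda(2n)=p_{3d}(n)+2\sum_{k=1}^{\infty}(-1)^k p_{3d}(n-3k^2),
\]
and writing out the first few terms $3\cdot1^2=3$, $3\cdot2^2=12,\dots$ recovers the stated identity.

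There is no genuine obstacle here; the argument is the same coefficient-comparison scheme used for $P_v$, $P_\sigma$ and $P_\beta$ earlier. The only points requiring a little care are the sign simplification $(-1)^{m^2}=(-1)^{m}$ and the folding of the bilateral sum into $1+2\sum_{k\ge1}$, which introduces the factor $2$ on every term except the leading one; this is exactly the source of the single unpaired term $p_{3d}(n)$ against the doubled tail. I would make sure the exponents $3k^2$ line up with the displayed indices before comparing coefficients, and confirm that the factorization $\ell_3^2/\ell_6=\phi(-q^3)$ is applied with the correct base change from \eqref{J1}.
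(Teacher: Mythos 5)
Your proposal is correct and follows essentially the same route as the paper: factor the right-hand side of \eqref{l2} as $\left(\ell_2/\ell_1\right)^3\phi(-q^3)$, identify the two factors via \eqref{p2} and the theta expansion of $\phi(-q^3)$ folded into $1+2\sum_{k\ge1}(-1)^kq^{3k^2}$, and compare coefficients. Your explicit appeal to \eqref{J1} for $\phi(-q^3)=\ell_3^2/\ell_6$ is if anything a more precise citation than the paper's reference to \eqref{eq4}.
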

\begin{theorem}We have
\begin{align*}
&{P_\lambda}(6n+2)=3\bar{p}_{3}(n)-9\bar{p}_{3}(n-3)+...+3(-1)^l(2l+1)\bar{p}_{3}\left(n-\dfrac{3l(l+1)}{2}\right)+....\\
&\hspace{4.5cm}..+6\sum_{l=0}^{\infty}\sum_{k=1}^{\infty}(-1)^{l+k}(2l+1)\bar{p}_{3}\left(n-3k^2-\dfrac{3l(l+1)}{2}\right).
\end{align*}
\end{theorem}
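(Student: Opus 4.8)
The plan is to expand the generating function in \eqref{l3} as a product of three series whose coefficients are already understood from the preliminaries, and then read off the coefficient of $q^n$. Starting from
$$\sum_{n=0}^{\infty}{P_\lambda}(6n+2)q^n=3\dfrac{\ell_3^5}{\ell_6}{\left(\dfrac{\ell_2}{\ell_1^2}\right)}^3,$$
I would first split off a factor of $\ell_3^3$ and rewrite the remaining quotient using \eqref{J1} with $q$ replaced by $q^3$, which gives $\dfrac{\ell_3^2}{\ell_6}=\phi(-q^3)$. Thus the right-hand side becomes $3\,\ell_3^3\,\phi(-q^3)\,{\left(\dfrac{\ell_2}{\ell_1^2}\right)}^3$. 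This is precisely a hybrid of the shapes treated in the earlier $P_\lambda(2n)$ theorem, where the factor $\phi(-q^3)$ appears, and in the $P_\beta(9n+8)$ theorem, where Jacobi's identity is applied to a cube, so the same two devices will finish the job.

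Next I would substitute the three standard expansions: by \eqref{eq4} with $q\mapsto -q^3$, $\phi(-q^3)=\sum_{k=-\infty}^{\infty}(-1)^kq^{3k^2}=1+2\sum_{k=1}^{\infty}(-1)^kq^{3k^2}$; by Jacobi's identity \eqref{j} with $q\mapsto q^3$, $\ell_3^3=\sum_{l=0}^{\infty}(-1)^l(2l+1)q^{3l(l+1)/2}$; and by \eqref{p1}, ${\left(\dfrac{\ell_2}{\ell_1^2}\right)}^3=\sum_{m=0}^{\infty}\bar{p}_3(m)q^m$. The key structural observation is that $\phi(-q^3)$ carries a constant term $1$ together with a tail $2\sum_{k\ge1}(-1)^kq^{3k^2}$, and these two pieces generate exactly the two sums in the statement. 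Multiplying $\ell_3^3$ by ${\left(\dfrac{\ell_2}{\ell_1^2}\right)}^3$ and then by the constant term of $\phi(-q^3)$ produces the single sum $3\sum_{l\ge0}(-1)^l(2l+1)\bar{p}_3\!\left(n-\tfrac{3l(l+1)}{2}\right)$, whose $l=0$ and $l=1$ terms are $3\bar{p}_3(n)$ and $-9\bar{p}_3(n-3)$, matching the displayed leading terms; multiplying instead by the tail produces the double sum $6\sum_{l\ge0}\sum_{k\ge1}(-1)^{l+k}(2l+1)\bar{p}_3\!\left(n-3k^2-\tfrac{3l(l+1)}{2}\right)$.

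Finally I would equate the coefficients of $q^n$ on both sides to obtain the claimed identity. The only genuine work is the bookkeeping of the triple Cauchy product, and I expect the mild obstacle to be nothing more than keeping the two families of sums, the one coming from the constant $1$ and the one coming from the tail, properly separated and correctly indexed; there is no deeper difficulty, and the computation runs entirely parallel to the two preceding theorems.
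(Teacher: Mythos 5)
Your proposal is correct and follows essentially the same route as the paper: both start from \eqref{l3}, rewrite $\ell_3^5/\ell_6$ as $\ell_3^3\,\phi(-q^3)$, expand the three factors via \eqref{eq4}, \eqref{j} (with $q\mapsto q^3$) and \eqref{p1}, and split $\phi(-q^3)=1+2\sum_{k\ge1}(-1)^kq^{3k^2}$ to obtain the single and double sums before comparing coefficients of $q^n$. No substantive difference from the paper's argument.
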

\begin{proof}
From \eqref{l3}, we note that
$$\sum_{n=0}^{\infty}{P_\lambda}(6n+2)q^n
=3{\left(\dfrac{\ell_2}{\ell_1^2}\right)}^3\phi(-q^3)\ell_3^3.$$ Employing \eqref{p1}, \eqref{eq4} and \eqref{j}, we obtain
\begin{align*}\sum_{n=0}^{\infty}{P_\lambda}(6n+2)q^n
&=3{\left(\dfrac{\ell_2}{\ell_1^2}\right)}^3\left(1+2\sum_{k=1}^{\infty}(-1)^kq^{3k^2}\right)\left(\sum_{l=0}^{\infty}(-1)^l(2l+1)q^{{3l(l+1)}/2}\right)\\
&=3\left(\sum_{n=0}^{\infty}\bar{p}_{3}(n)q^n\right)\left(\sum_{l=0}^{\infty}(-1)^l(2l+1)q^{{3l(l+1)}/2}\right)\\
&\hspace{1cm}+6\left(\sum_{n=0}^{\infty}\bar{p}_{3}(n)q^n\right)\left(\sum_{l=0}^{\infty}\sum_{k=1}^{\infty}(-1)^{l+k}(2l+1)q^{3k^2+{3l(l+1)}/2}\right)\\
&=3\sum_{n=0}^{\infty}\sum_{l=0}^{\infty}(-1)^l(2l+1)\bar{p}_{3}\left(n-\dfrac{3l(l+1)}{2}\right)q^n\end{align*}
\begin{equation}\label{l5}
\hspace{4.9cm}+6\sum_{n=0}^{\infty}\sum_{l=0}^{\infty}\sum_{k=1}^{\infty}(-1)^{l+k}(2l+1)\bar{p}_{3}\left(n-3k^2-\dfrac{3l(l+1)}{2}\right)q^n.
\end{equation}
On comparing the coefficients of $q^n$ in \eqref{l5}, we arrive at the desired result.
\end{proof}
\begin{theorem}We have
\begin{align*}
&{P_\lambda}(6n+4)-3{P_\lambda}(6n-2)+...+(-1)^m(2m+1){P_\lambda}\Big(6n-3m(m+1)+4\Big)+...\\
&\hspace{-.2cm}=6\sum_{l=0}^{\infty}(-1)^l(2l+1)p_{2d}\Big(n-3l(l+1)\Big)+12\sum_{l=0}^{\infty}\sum_{k=1}^{\infty}(-1)^{l+k}(2l+1)p_{2d}\Big(n-3k^2-3l(l+1)\Big).
\end{align*}
\end{theorem}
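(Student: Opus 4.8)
The plan is to realize both sides of the asserted identity as the coefficient of $q^n$ in one and the same $q$-series, namely the series obtained by multiplying the generating function \eqref{l4} for $P_\lambda(6n+4)$ by $\ell_1^3$. First I would observe that the alternating combination on the left-hand side is exactly a convolution: since $P_\lambda(6n'+4)$ is the coefficient of $q^{n'}$ in $\sum_{n}P_\lambda(6n+4)q^n$ and $6\cdot\frac{m(m+1)}{2}=3m(m+1)$, Jacobi's identity \eqref{j} gives $\ell_1^3=\sum_{m=0}^{\infty}(-1)^m(2m+1)q^{m(m+1)/2}$, so that the left-hand side is precisely the coefficient of $q^n$ in
\begin{equation*}
\Bigl(\sum_{n=0}^{\infty}P_\lambda(6n+4)q^n\Bigr)\ell_1^3 .
\end{equation*}
Using \eqref{l4}, this product collapses to $\dfrac{\ell_2^2\ell_3^2\ell_6^2}{\ell_1^2}$ (up to the leading numerical constant, which I would track carefully through the computation).

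Next I would rewrite this product so that the factors appearing on the right-hand side of the statement are exposed. The key algebraic step is the factorization $\ell_3^2=\phi(-q^3)\,\ell_6$, which follows from \eqref{J1} with $q$ replaced by $q^3$; this turns the series into $\left(\dfrac{\ell_2}{\ell_1}\right)^2\phi(-q^3)\,\ell_6^3$. I would then substitute the three standard expansions: \eqref{p2} with $r=2$ gives $\left(\dfrac{\ell_2}{\ell_1}\right)^2=\sum_{n}p_{2d}(n)q^n$; \eqref{eq4} together with \eqref{J1} gives $\phi(-q^3)=1+2\sum_{k=1}^{\infty}(-1)^kq^{3k^2}$; and \eqref{j} with $q$ replaced by $q^6$ gives $\ell_6^3=\sum_{l=0}^{\infty}(-1)^l(2l+1)q^{3l(l+1)}$, since $6\cdot\frac{l(l+1)}{2}=3l(l+1)$.

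Finally I would form the triple Cauchy product of these three series and read off the coefficient of $q^n$. The constant term $1$ of $\phi(-q^3)$ contributes the single sum $\sum_{l}(-1)^l(2l+1)p_{2d}(n-3l(l+1))$, while the tail $2\sum_{k\ge1}(-1)^kq^{3k^2}$ contributes the double sum with the extra factor $2$, namely $\sum_{l}\sum_{k\ge1}(-1)^{l+k}(2l+1)p_{2d}(n-3k^2-3l(l+1))$. Equating this expansion with the coefficient computed in the first step yields the claimed recurrence. The main obstacle is purely bookkeeping: one must keep the three summation indices straight (the index $m$ coming from $\ell_1^3$ on the left, and the indices $k,l$ coming from $\phi(-q^3)$ and $\ell_6^3$ on the right), correctly split $\phi(-q^3)$ into its $k=0$ and $k\ge1$ parts so that the two multipliers, and hence the ratio between the single and the double sum, come out right, and propagate the overall numerical constant from \eqref{l4} consistently to the final answer.
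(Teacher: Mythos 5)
Your proposal follows the paper's proof exactly: multiply \eqref{l4} by $\ell_1^3$, read the left-hand side as the alternating convolution via Jacobi's identity \eqref{j}, rewrite the right-hand side as $\left(\ell_2/\ell_1\right)^2\phi(-q^3)\ell_6^3$ using \eqref{J1}, expand with \eqref{p2}, \eqref{eq4} and \eqref{j}, and compare coefficients of $q^n$. The one loose end is the numerical constant you deferred: as printed, \eqref{l4} would give $1$ and $2$ rather than $6$ and $12$, because \eqref{l4} is missing an overall factor of $6$ (one checks directly that $P_\lambda(4)=6$ while the printed product has constant term $1$); with that factor restored your computation yields exactly the stated identity, matching the paper's Equation \eqref{l8}.
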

\begin{proof}From \eqref{l4}, we note that
\begin{equation}\label{l8}
\left(\sum_{n=0}^{\infty}{P_\lambda}(6n+4)q^n\right)\ell_1^3=6{\left(\dfrac{\ell_2}{\ell_1}\right)}^2\phi(-q^3)\ell_6^3.
\end{equation} Employing \eqref{eq4} and \eqref{j} in \eqref{l8}, we obtain
\begin{equation}\label{l9}
\left(\sum_{n=0}^{\infty}{P_\lambda}(6n+4)q^n\right)\ell_1^3=6{\left(\dfrac{\ell_2}{\ell_1}\right)}^2\left(1+2\sum_{k=1}^{\infty}(-1)^kq^{3k^2}\right)\left(\sum_{l=0}^{\infty}(-1)^l(2l+1)q^{3l(l+1)}\right).
\end{equation} Again, emlpoying \eqref{p2} and \eqref{j} in \eqref{l9}, we obtain
\begin{align*}
&\left(\sum_{n=0}^{\infty}{P_\lambda}(6n+4)q^n\right)\left(\sum_{m=0}^{\infty}(-1)^m(2m+1)q^{{m(m+1)}/2}\right)\\
&\hspace{1cm}=6\left(\sum_{n=0}^{\infty}p_{2d}(n)q^n\right)\left(\sum_{l=0}^{\infty}(-1)^l(2l+1)q^{3l(l+1)}+2\sum_{l=0}^{\infty}\sum_{k=1}^{\infty}(-1)^{l+k}(2l+1)q^{3k^2+3l(l+1)}\right).\end{align*}
\newpage
which implies
\begin{align}\label{l6}
&\hspace{-1cm}\sum_{n=0}^{\infty}\sum_{m=0}^{\infty}(-1)^m(2m+1){P_\lambda}\Big(6n-3m(m+1)+4\Big)q^n\notag\\
&\hspace{1.5cm}=6\sum_{n=0}^{\infty}\sum_{l=0}^{\infty}(-1)^l(2l+1)p_{2d}\Big(n-3l(l+1)\Big)q^n\notag\\
&\hspace{2cm}+12\sum_{n=0}^{\infty}\sum_{l=0}^{\infty}\sum_{k=1}^{\infty}(-1)^{l+k}(2l+1)p_{2d}\left(n-3k^2-3l(l+1)\right)q^n.
\end{align}
On comparing the coefficients of $q^n$ in \eqref{l6}, we arrive at the desired result.
\end{proof}

\section*{Acknowledgements} The first author acknowledges the financial support received from University Grants Commission through National Fellowship for Scheduled Caste  Students under grant Ref. no.: 211610029643.

\section*{\bf Declarations}
\noindent{\bf Conflict of Interest.} The authors declare that there is no conflict of interest regarding the publication of
this article.

\noindent{\bf Human and animal rights.} The authors declare that there is no research involving human participants or
animals in the contained of this paper.	

\noindent{\bf Data availability statements.} Data sharing not applicable to this article as no datasets were generated or analysed during the current study.	

\end{document}